\pgfplotsset{compat=1.17}
\newtheorem{proposition}{Proposition}[section]
\newtheorem{theorem}{Theorem}
\newtheorem{assumption}[proposition]{Assumption}
\newtheorem{lemma}[proposition]{Lemma}
\theoremstyle{definition}
\newtheorem{definition}[proposition]{Definition}
\newtheorem{remark}[proposition]{Remark}
\tikzstyle{process} = [
\tikzstyle{arrow} = [thick,->,>=stealth]
\begin{document}
\usetikzlibrary{calc}

%\begin{frontmatter}

Highlights
\begin{itemize}
\item Proof that the Maximum-Likelihood Estimator in the Linkage Model is consistent.
\item Proof that the Maximum-Likelihood Estimator in the Linkage Model is asymptotically normally distributed.
\item Statistical Test to decide whether  the Linkage Model or the Admixture Model fits better to the data.
\end{itemize}

\newpage

\title{\LARGE Statistical Test to compare the Linkage Model and the Admixture Model based on Central Limit Results}

\maketitle

\begin{center}
   {Carola Sophia Heinzel}%\orcid{0009-0000-9042-949X}} %% Author name

%% Author affiliation
Department of Mathematical Stochastics,\\
Ernst-Zermelo Straße 1,
            Freiburg im Breisgau,
            79140,Germany \\
            carola.heinzel@stochastik.uni-freiburg.de
\end{center}

%% use optional labels to link authors explicitly to addresses:
%% \author[label1,label2]{}
%% \affiliation[label1]{organization={},
%%             addressline={},
%%             city={},
%%             postcode={},
%%             state={},
%%             country={}}

%% Abstract
\begin{abstract}
In the Admixture Model, the probability that an individual carries a certain allele at a specific marker depends on the allele frequencies in $K$ ancestral populations and the proportion of the individual's genome originating from these populations. The markers are assumed to be independent. The Linkage Model is a Hidden Markov Model that extends the Admixture Model by incorporating linkage between neighboring loci.

We prove consistency and asymptotic normality of maximum likelihood estimators for the ancestry of individuals in the Linkage Model, complementing earlier results by \citep{pfaff2004information, pfaffelhuber2022central, HEINZEL2025} for the Admixture Model. These results are used to prove that a statistical test that allows for model selection between the Admixture Model and the Linkage Model is an asymptotic level-$\alpha$-test. Finally, we demonstrate the practical relevance of our results by applying the test to real-world data from \cite{10002015global}.
\end{abstract}

%% Keywords
%% Keywords
\textit{
Keywords: Linkage Model, Admixture Model, Central Limit Results,  Maximum Likelihood Estimator,  Consistency, Statistical Test, Model Selection}

\section{Introduction}
The Linkage Model \citep{falush2003} is widely used to explain the genetic data of individuals. It assumes that genetic data can be described by the ancestry proportions $q$ of an individual from $K$ ancestral populations and the allele frequencies in these populations.  Furthermore, there exists a parameter $r$, which can be interpreted as the number of generations since an admixture event. The genetic distance between loci is also considered by using a Hidden Markov Model (HMM). The Admixture Model can be seen as a special case of the Linkage Model with $r = \infty$, i.e. the data is assumed to be independent across markers.

A natural question that arises is which model, the Admixture Model or the Linkage Model, fits a given dataset better. This leads to a nested model selection problem as described by e.g. \cite{anderson2004model}. In our case, we consider the statistical hypothesis test
\begin{align}
    H_0: r = \infty \quad \text{vs.} \quad H_1: r \in [0,\infty). \label{def:st}
\end{align}
This is a classical test problem, for which asymptotic theory exists \citep{wilks1938large}. However, the test is only valid as an asymptotic level-$\alpha$ test if the MLEs for both ancestry $q$ and $r$ are asymptotically normally distributed under the Linkage Model. Proving asymptotic normality requires results from the theory of HMMs.

Consistency of Maximum-Likelihood Estimators (MLEs) in HMMs with finite state and observation spaces has been addressed in foundational work by \cite{baum1966statistical} and \cite{petrie1969probabilistic}. More recent work has relaxed several of the original assumptions, e.g., \citep{leroux1992maximum, douc2004asymptotic, douc2011consistency, genon2006leroux, le2000basic, le2000exponential}, but these results assume time-homogeneous Markov chains. Central limit theorems for MLEs in HMMs have also been developed \citep{bickel1998asymptotic, douc2004asymptotic, jensen1999asymptotic, brouste2010asymptotic}. Overviews of the statistical theory of HMMs can be found in \cite{ephraim2002hidden, cappé2005inference}. However, the specific case of a time-inhomogeneous Markov chain that maintains the same stationary distribution for the hidden chain across all time points - as occurs in the Linkage Model when $q$ is the initial distribution - has not yet been studied.

The question whether the data should be used in STRUCTURE \citep{pritchard2000}(a software that estimates the ancestry and the allele frequencies for the Admixture Model or the Linkage Model) analysis has already been considered. For example, to decide whether a pair of loci is suitable for STRUCTURE, \cite{kaeuffer2007detecting} suggested using $r_{LD}$, a measure of linkage disequilibrium introduced by \cite{hill1968linkage}. They ran STRUCTURE with the Linkage Model and used logistic regression to assess the impact of $r_{LD}$ on the detection of population structure. 

To achieve our goal of establishing the theoretical properties of the test in \eqref{def:st}, we proceed as follows:  
First, we precisely define both the Admixture Model and the Linkage Model. Next, we prove the asymptotic normality of the MLEs in the Linkage Model, both when the number of observations tends to infinity. Based on these results, we quantify the uncertainty of the MLEs and we construct a statistical test for \eqref{def:st} and prove that it is an asymptotic level-$\alpha$ test. Finally, we evaluate the statistical test through simulations and apply it to real data from \cite{10002015global}.

\section{Models}

We first define the Linkage Model for one individual and bi-allelic markers in Definition \ref{def:lm}. The number of alleles on chromosome $c \in \{1,..., C\}$ at marker $m \in \{1,..., M_c\}$ is called $X_{c,m} \in \{0, 1\}.$ We denote the genetic distance in centi Morgan (cM) between the loci $m-1$ and $m$ on chromosome $c$ by $d_{c,m}$. Recall that one centi Morgan corresponds to a $0.01$ crossing over between two genetic markers per Meiosis. Let $\mathbb{S}^K$ be the $(K-1)$-dimensional simplex and let $q :=(q_{1}, \ldots, q_{K  }) \in \mathbb S^K$ be the ancestries of the individual from population $1,..., K.$ The frequency of an allele in population $k \in \{1,..., K\}$ at marker $m$, on chromosome $c$ is called $p_{c, k,m}.$  {We assume in this work that the allele frequencies are known, which is called supervised setting.}
The random variable $Z_{c,m}, m = 1,..., M_c, c = 1,..., C$ names the ancestral population of the allele at marker $m$ on chromosome $c$. We write $(q^0, r^0)$ for the true values of $(q, r)$ and $\mathbb E, \mathbb P$ for the expected value and the probability respectively, with respect to the true parameters $q^0, r^0.$

\begin{definition}[Linkage Model for Haploid Individuals]\label{def:lm}
We define the Markov chain 
    \begin{align} 
     \mathbb P_{q,r}(Z_{c,1} = k) &= q_{k}, \notag \\
        \mathbb P_{q,r}(Z_{c,m} = \tilde k|Z_{c,m-1} =  k)  &=  \begin{cases}
            e^{- d_{c,m} r} + \left(1-e^{-d_{c,m} r}\right) q_{k}, \textup{ if } k = \tilde k \\
            \left(1-e^{-d_{c,m} r}\right) q_{\tilde k}, \textup{ else}, \label{eq:tp}
        \end{cases}\\
            \mathbb P_{q,r}(Z_{c+1,1} = k|Z_{c,M_c}) &= q_{k} \notag.
    \end{align}
The emission probability is defined by
\begin{align*}
    \mathbb P_{q,r}(X_{c,m} = x|Z_{c,m} = k) =  {p_{c,k,m}^x (1-p_{c,k,m})^{1-x}}.
\end{align*}
We define the log-likelihood, for $M_{total} := \sum_{c = 1}^C {M_c},$
\begin{align*}
    \ell((x_{1,1},..., x_{C,M}), (q,r)) := \frac{\log\left( \mathbb P_{q,r}((X_{c,m} = x_{c,m})_{c=1,..., C, m = 1,..., M_c}))\right)}{M_{total}}.
\end{align*}

\end{definition}

A visualization of the model is shown in Figure \ref{fig:hmm}.

\begin{figure}[h!]
\centering
\hbox{%
  % Linke Minipage nach oben verschoben
  \raisebox{-47mm}[0pt][2cm]{%
\begin{minipage}{0.35\textwidth}
  %\vspace{-5mm}  % verschiebt alles nach oben
    \centering
    \textup{(I) Reference Data Set\\
    \hphantom{ykvnavn}}
    \vspace*{0.5cm}
\includegraphics[width=\linewidth]{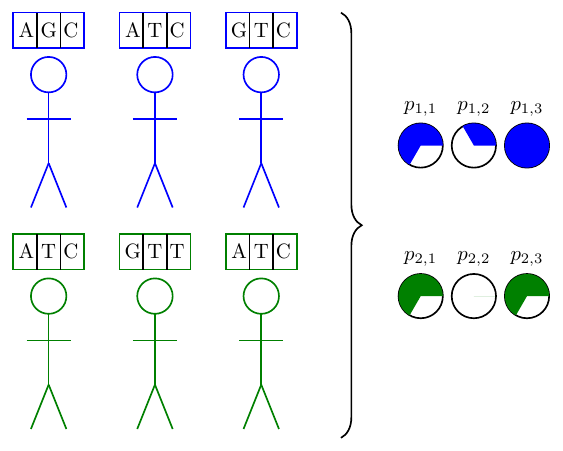} % <-- dein linkes PDF
\end{minipage}}}
\hfill
\begin{minipage}{0.6\textwidth}
    \centering
    \textup{(II) Model}
    \vspace*{1.5cm}
    % ---------------------- TIKZ MITTELTEIL ----------------------
\begin{tikzpicture}[
    scale=0.7,  % << alles 70% der Originalgröße
    every node/.style={transform shape}, % wichtig, damit Text mit skaliert
    state/.style={circle, draw, minimum size=1.2cm, thick},
    >=Stealth,
    node distance=2.5cm and 2cm
]

% --- Hidden states Z ---
\tikzset{
  triState/.style={
    draw=black,
    shape=regular polygon,
    regular polygon sides=3,
    minimum size=1.4cm,
  %  pattern=north east lines,
   % pattern color=gray,
    text height=1.5ex,
    text depth=.25ex,
    inner sep=0pt
  }
}

\newcommand{\stackedbar}{
        \begin{tikzpicture}
            \begin{axis}[
                ybar stacked,
                ymin=0, ymax=1,
                symbolic x coords={Ind.},
                xtick=data,
                 ytick={0,1}, 
                bar width=3pt,
                ylabel={$q_{\cdot}$},
                ylabel style={xshift=0pt, yshift =-11pt},
                width=2cm, height=2.5cm
            ]
                \addplot[fill=green!50!black] coordinates {(Ind., 0.7)};
                \addplot[fill=blue] coordinates {(Ind., 0.3)};
            \end{axis}
        \end{tikzpicture}

}

% Knoten
\node[triState] (z1) at (-0.5,2) {$Z_1$};
\node[triState, right=3cm of z1] (z2) {$Z_2$};
\node[triState, right=3cm of z2] (z3) {$Z_3$};

% --- Common rectangle for X layer ---
\node[draw, thick, minimum width=10cm, minimum height=1cm, below=2.6cm of z2, anchor=north] (Xrect) {};

% Coordinates for vertical separators (exactly under Z_i)
\path (z1) ++(0,-3) coordinate (x1);
\path (z2) ++(0,-3) coordinate (x2);
\path (z3) ++(0,-3) coordinate (x3);

% Labels for X_i (centered in each segment)
\node at (-0.5, -1.5) {$X_1$};
\node at (3.25, -1.5) {$X_2$};
\node at (7, -1.5) {$X_3$};

% Draw vertical separators between X_i
\draw[thick] (1.625, -1.95) -- (1.625,-0.95);
\draw[thick] (4.875, -1.95) -- (4.875,-0.95);

% --- Transitions between hidden states ---
\draw[->, thick] (z1) -- node[below] {$A_{Z_1,Z_2}$} (z2);
    \node[above=50pt] (toy1) at ($(z1)!0.25!(z2)$) {$r$};
  \node[above=20pt] (bar1) at ($(z1)!0.55!(z2)$) {\stackedbar};
      \draw[->, thick] (bar1) to node[above,sloped]{\scriptsize } ($(z1)!0.55!(z2)$);
\draw[->, thick] (z2) -- node[below] {$A_{Z_2,Z_3}$} (z3);
    \node[above=50pt] (toy1) at ($(z2)!0.2!(z3)$) {$r$};
  \node[above=20pt] (bar2) at ($(z2)!0.5!(z3)$) {\stackedbar};
\draw[->, thick] (bar2) to node[above,sloped]{\scriptsize } ($(z2)!0.5!(z3)$);
% --- Emission arrows (Z_i to X_i) ---
\draw[->, thick] (z1) -- node[right=2pt, midway, font=\footnotesize] {$B_{Z_1,X_1}$} (x1);

\def\radius{0.2} % Radius der kleinen Kreise
% Position leicht links von z1
\coordinate (leftCircles) at ($(z1)+(-0.8,-1.5)$);
%\node at ($(leftCircles)+(-1,0)$) {\stackedbar};

% Oberer Kreis: 1/3 blau gefüllt
\draw[thick] (leftCircles) ++(0,0.25) circle (\radius);
\fill[blue] (leftCircles) ++(0,0.25) -- ++(\radius,0) arc (0:240:\radius) -- cycle;

% Unterer Kreis: leer
\draw[thick] (leftCircles) ++(0,-0.25) circle (\radius);
\fill[green!50!black] (leftCircles) ++(0,-0.25) -- ++(\radius,0) arc (0:240:\radius) -- cycle;

\draw[->, thick]
  ($(leftCircles)+(0.17,0.05)$) -- ($(z1)+(0,-1.455)$);

\draw[->, thick] (z2) -- node[right=2pt, midway, font=\footnotesize] {$B_{Z_2,X_2}$} (x2);
\draw[->, thick] (z3) -- node[right=2pt, midway, font=\footnotesize] {$B_{Z_3,X_3}$} (x3);

\coordinate (leftCircles) at ($(z2)+(-0.8,-1.5)$);
%\node at ($(leftCircles)+(-1,0)$) {\stackedbar};

% Oberer Kreis: 1/3 blau gefüllt
\draw[thick] (leftCircles) ++(0,0.25) circle (\radius);
\fill[blue] (leftCircles) ++(0,0.25) -- ++(\radius,0) arc (0:120:\radius) -- cycle;
\draw[->, thick]
  ($(leftCircles)+(0.17,0.05)$) -- ($(z2)+(0,-1.455)$);
% Unterer Kreis: leer
\draw[thick] (leftCircles) ++(0,-0.25) circle (\radius);
\fill[green!50!black] (leftCircles) ++(0,-0.25) -- ++(\radius,0) arc (0:0:\radius) -- cycle;

\coordinate (leftCircles) at ($(z3)+(-0.8,-1.5)$);
%\node at ($(leftCircles)+(-1,0)$) {\stackedbar};
\draw[->, thick]
  ($(leftCircles)+(0.17,0.05)$) -- ($(z3)+(0,-1.455)$);
% Oberer Kreis: 1/3 blau gefüllt
\draw[thick] (leftCircles) ++(0,0.25) circle (\radius);
\fill[blue] (leftCircles) ++(0,0.25) -- ++(\radius,0) arc (0:360:\radius) -- cycle;

% Unterer Kreis: leer
\draw[thick] (leftCircles) ++(0,-0.25) circle (\radius);
\fill[green!50!black] (leftCircles) ++(0,-0.25) -- ++(\radius,0) arc (0:240:\radius) -- cycle;

\node[font=\footnotesize, rotate=90] at (-2,2.3) {\textbf{Hidden}};\node[font=\footnotesize, rotate = 90]  at (-2,-1.5) {\textbf{Data}};

\end{tikzpicture}
\end{minipage}
\begin{tikzpicture}[overlay]
    \node[anchor=south, yshift=1cm] 
        at (-12.3, -4) {(III) Estimators};
  % Position relativ zur aktuellen Seite/Mitte anpassen
  \node (a) at (-14,-2) {};
  \node (b) at (0,-2) {};
  \draw [decorate,decoration={brace,amplitude=10pt,mirror}]
    (a) -- (b) node[midway,below=10pt]{$\Downarrow$};
    \node at (-8.5, -3.8){$\hat r$};
    \node at (-5.5, -4) {
        \begin{tikzpicture}
            \begin{axis}[
                ybar stacked,
                ymin=0, ymax=1,
                symbolic x coords={Ind.},
                xtick=data,
                 ytick={0,0.5, 1}, 
                bar width=5pt,
                ylabel={$\hat q_{\cdot}$},
                width=2.5cm, height=3cm
            ]
                \addplot[fill=green!50!black] coordinates {(Ind., 0.6)};
                \addplot[fill=blue] coordinates {(Ind., 0.4)};
            \end{axis}
        \end{tikzpicture}
    };

\end{tikzpicture}
\vspace{3cm}
\caption{ {Visualization of the Linkage Model for two populations (blue and green) and three markers ($M = 3$). (I): Reference Data Set that is used to calculate the allele frequencies. In this example we only consider bi-allelic markers. The letters A, C, G and T stand for the bases of the DNA, i.e. for Adenin, Cytosin, Guanin and Thymin. From this reference data set, we receive the allele frequencies, called $p_{k,m}, k \in \{1,2\}, m \in \{1,2,3\}.$ (II): Hidden Markov Model to describe the probabilities of the observed data $X_1,X_2,X_3.$ The hidden chain is represented as $Z_1, Z_2$ and $Z_3$, the transition probabilities of state $Z_i$ to $Z_{i+1}$ are denoted as $A_{Z_i, Z_{i+1}}$ and the emission probabilities $\mathbb{P}(X_i|Z_i)$ are called $B_{Z_i, X_i}.$ Ind. stands for Individual. (III): Estimators based on the reference data set and the Linkage Model.}  }\label{fig:hmm}
\end{figure}

We always assume that the allele frequencies and $(d_{c,m})_{c = 1, ..., C, m = 1,..., M_c}$ are known. Definition \ref{def:lm} is for haploid individuals, which can easily be extended to the diploid case. 

\begin{remark}[Diploid Case]
Let $Z^j := (Z^j_{c,1},..., Z^j_{c,M}), j = 1, 2,$ be two independent Markov chains with transition matrix \eqref{eq:tp}. Let $X_{c,m}^{dip}$ be the number of alleles on chromosome $c,$ marker $m,$ for diploid individuals. The emission probabilities for the diploid case are defined by
\begin{align*}
& { \mathbb P_{q,r}\left(X^{dip}_{c,m} = x|Z^{1}_{c,m} = z_1, Z^{2}_{c,m} = z_2\right)}\\
    &\hspace{1cm} { =}\begin{cases}
        { p_{c,z_1,m} p_{c,z_2,m},} &x = 2 \\
       { p_{c,z_1,m} (1-p_{c,z_2,m}) + p_{c,z_2,m}(1-p_{c,z_1,m} ),} &x = 1 \\ 
        {  (1 - p_{c,z_1,m}) (1 - p_{c,z_2,m})},& x = 0.
    \end{cases}
\end{align*}
For the diploid case, the information about the maternal and the paternal copies \citep{choi2018comparison}, i.e. phased data, is important. There are two kinds of approaches to receive phased data: laboratory-based methods \citep{zheng2016haplotyping, amini2014haplotype, duitama2012fosmid} and computational methods \citep{choi2018comparison}. Examples for the latter type are \citep{snyder2015haplotype, loh2016reference, delaneau2012linear}.  \cite{falush2003} also proposed a method to deal with linkage without phasing. However, in this study, we assume that we have phased data as e.g. in \cite{10002015global}.

\end{remark}

While phased data is important for the Linkage Model, the Admixture Model can deal with unphased data without any problems. It is defined in Definition \ref{def:AM}.

\begin{definition}[Admixture Model]\label{def:AM}
   The Admixture Model is a special case of the Linkage Model with  $r = \infty.$ Especially, this means that it holds
   $$X_{c,m} \sim \textup{Bin}(2, \langle q_{\cdot}^0, p_{c, \cdot, m}\rangle).$$
\end{definition}
A visualization of the Admixture Model is shown in Figure \ref{fig:am}.
\begin{figure}[h!]
\centering
\hbox{%
  % Linke Minipage nach oben verschoben
  \raisebox{-47mm}[0pt][2cm]{%
\begin{minipage}{0.35\textwidth}
  %\vspace{-5mm}  % verschiebt alles nach oben
    \centering
    \textup{(I) Reference Data Set\\
    \hphantom{ykvnavn}}
    \vspace*{0.5cm}
\includegraphics[width=\linewidth]{6_Indiv.pdf} % <-- dein linkes PDF
\end{minipage}}}
\hfill
\begin{minipage}{0.6\textwidth}
    \centering
    \textup{(II) Model\\
    \hphantom{ykvnavn}}
    \vspace*{1.5cm}
    % ---------------------- TIKZ MITTELTEIL ----------------------
\begin{tikzpicture}[
    scale=0.7,  % << alles 70% der Originalgröße
    every node/.style={transform shape}, % wichtig, damit Text mit skaliert
    state/.style={circle, draw, minimum size=1.2cm, thick},
    >=Stealth,
    node distance=2.5cm and 2cm
]

\newcommand{\stackedbar}{
        \begin{tikzpicture}
            \begin{axis}[
                ybar stacked,
                ymin=0, ymax=1,
                symbolic x coords={Ind.},
                xtick=data,
                 ytick={0,1}, 
                bar width=3pt,
                ylabel={$q_{\cdot}$},
                ylabel style={xshift=0pt, yshift =-11pt},
                width=2cm, height=2.5cm
            ]
                \addplot[fill=green!50!black] coordinates {(Ind., 0.7)};
                \addplot[fill=blue] coordinates {(Ind., 0.3)};
            \end{axis}
        \end{tikzpicture}

}

% --- Hidden states Z ---
%\node[state] (z1) at (0,3) {$q_{i \cdot}$};
\node (z1) at (0, 2) {\stackedbar};
\node[right=2cm of z1] (z2) {\stackedbar};
\node[right=2cm of z2] (z3){\stackedbar};

% --- Common rectangle for X layer ---
\node[draw, thick, minimum width=11cm, minimum height=1cm, below=1.5cm of z2, anchor=north] (Xrect) {};

% Coordinates for vertical separators (exactly under Z_i)
\path (z1) ++(0,-2.5) coordinate (x1);
\path (z2) ++(0,-2.5) coordinate (x2);
\path (z3) ++(0,-2.5) coordinate (x3);

% Labels for X_i (centered in each segment)
\node at (0, -1.1) {$X_{i1}$};
\node at (3.5, -1.1) {$X_{i2}$};
\node at (7.5, -1.1) {$X_{i3}$};

% Draw vertical separators between X_i
\draw[thick] (1.625, -1.55) -- (1.625,-0.55);
\draw[thick] (5.275, -1.55) -- (5.275,-0.55);

% --- Emission arrows (Z_i to X_i) ---
\draw[->, thick] (z1) -- node[right=2pt, midway, font=\footnotesize] {$\langle q_{i\cdot}, p_{\cdot,1} \rangle$} (x1);

\def\radius{0.2} % Radius der kleinen Kreise
% Position leicht links von z1
\coordinate (leftCircles) at ($(z1)+(-0.8,-1.5)$);

% Oberer Kreis: 1/3 blau gefüllt
\draw[thick] (leftCircles) ++(0,0.25) circle (\radius);
\fill[blue] (leftCircles) ++(0,0.25) -- ++(\radius,0) arc (0:240:\radius) -- cycle;

% Unterer Kreis: leer
\draw[thick] (leftCircles) ++(0,-0.25) circle (\radius);
\fill[green!50!black] (leftCircles) ++(0,-0.25) -- ++(\radius,0) arc (0:240:\radius) -- cycle;

\draw[->, thick]
  ($(leftCircles)+(0.17,0.05)$) -- ($(z1)+(0,-1.455)$);

\draw[->, thick] (z2) -- node[right=2pt, midway, font=\footnotesize] {$\langle q_{i\cdot}, p_{\cdot,2} \rangle$} (x2);
\draw[->, thick] (z3) -- node[right=2pt, midway, font=\footnotesize] {$\langle q_{i\cdot}, p_{\cdot,3} \rangle$} (x3);

\coordinate (leftCircles) at ($(z2)+(-0.8,-1.5)$);

% Oberer Kreis: 1/3 blau gefüllt
\draw[thick] (leftCircles) ++(0,0.25) circle (\radius);
\fill[blue] (leftCircles) ++(0,0.25) -- ++(\radius,0) arc (0:120:\radius) -- cycle;
\draw[->, thick]
  ($(leftCircles)+(0.17,0.05)$) -- ($(z2)+(0,-1.455)$);
% Unterer Kreis: leer
\draw[thick] (leftCircles) ++(0,-0.25) circle (\radius);
\fill[green!50!black] (leftCircles) ++(0,-0.25) -- ++(\radius,0) arc (0:0:\radius) -- cycle;

\coordinate (leftCircles) at ($(z3)+(-0.8,-1.5)$);
\draw[->, thick]
  ($(leftCircles)+(0.17,0.05)$) -- ($(z3)+(0,-1.455)$);
% Oberer Kreis: 1/3 blau gefüllt
\draw[thick] (leftCircles) ++(0,0.25) circle (\radius);
\fill[blue] (leftCircles) ++(0,0.25) -- ++(\radius,0) arc (0:360:\radius) -- cycle;

% Unterer Kreis: leer
\draw[thick] (leftCircles) ++(0,-0.25) circle (\radius);
\fill[green!50!black] (leftCircles) ++(0,-0.25) -- ++(\radius,0) arc (0:240:\radius) -- cycle;

\node[font=\footnotesize, , rotate=90]  at (-2.7,-1.1) {\textbf{Data}};

\end{tikzpicture}
\end{minipage}
\begin{tikzpicture}[overlay]
    \node[anchor=south, yshift=1cm] 
        at (-12.3, -4) {(III) Estimators};
  % Position relativ zur aktuellen Seite/Mitte anpassen
  \node (a) at (-14,-2) {};
  \node (b) at (0,-2) {};
  \draw [decorate,decoration={brace,amplitude=10pt,mirror}]
    (a) -- (b) node[midway,below=10pt]{$\Downarrow$};
    \node at (-6.5, -4) {
        \begin{tikzpicture}
            \begin{axis}[
                ybar stacked,
                ymin=0, ymax=1,
                symbolic x coords={Ind.},
                xtick=data,
                 ytick={0,0.5, 1}, 
                bar width=5pt,
                ylabel={$\hat q_{\cdot}$},
                width=2.5cm, height=3cm
            ]
                \addplot[fill=green!50!black] coordinates {(Ind., 0.6)};
                \addplot[fill=blue] coordinates {(Ind., 0.4)};
            \end{axis}
        \end{tikzpicture}
    };

\end{tikzpicture}
\vspace{3cm}
\caption{ {Visualization of the Admixture Model for two populations (blue and green) and three markers. (I): Reference Data Set that is used to calculate the allele frequencies. In this example we only consider bi-allelic markers. (II): Visualization of the model (III): Estimators based on the reference data set and the Admixture Model.}  }\label{fig:am}
\end{figure}

%In our case, we assumed that $Z_{c,1} \sim q.$ However, this can be potentially also changed to an other initial distribution. 

Extending the models to a general number of individuals and markers with arbitrary number of alleles is straightforward. 

Finally, we define the statistical test that corresponds to test problem \ref{eq:tp}.

\begin{definition}[Statistical Test]\label{def:test}
    We define the test statistic for the test \eqref{def:st} by
    $$\Lambda := -2 \ln\left(\frac{\max\left\{ {\mathbb P_{q,r}}((X_{1,1},..., X_{C,M}), (q,\infty)): q \in \mathbb S^K\right\}}{\max\{  {\mathbb P_{q,r}}((X_{1,1},..., X_{C,M}), (q,r)): (q,r) \in \Theta\}}\right).$$ Furthermore, let $\chi^2_{1-2\alpha}$ be the $1-2\alpha$-quantile of the $\chi^2(1)$-distribution.
We reject $H_0$, if $\Lambda > \chi^2_{1-2\alpha}.$
\end{definition}

The test in Definition \ref{def:st} compares the relative support of the Linkage Model and the Admixture Model for the observed data. Importantly, this is a model-selection test: it implicitly assumes that one of the two models is adequate. It therefore does not address the possibility that both models provide a poor description of the data, in which case selecting the better model may still yield a misleading fit. This, however, is a different but widely considered research question as e.g. considered by \citep{tedeschi2006assessment, mimno2015posterior, carstens2022assessing}.

\section{ Main Results}

In this chapter, we prove the main results, i.e. consistency and central limit results, if the number of markers, $M_{total},$ tends to infinity. Therefore, we assume that Assumption \ref{ass:consistency:inh} holds. 

\begin{assumption}[Regularity and identifiability assumptions]
\label{ass:consistency:inh}
Let \(K\ge 2\). We assume the following conditions.

\begin{itemize}
    \item[(A1)]
    Let \(0<\kappa_q<\kappa_q'<1\) and
    \(0<r_{\mathrm{lb}}<r_{\mathrm{ub}}<\infty\). Define
    \[
    \mathcal Q
    :=
    \left\{
    q\in[\kappa_q,\kappa_q']^K:
    \sum_{k=1}^K q_k=1
    \right\}
    \]
    and
    \[
    \Theta:=\mathcal Q\times [r_{\mathrm{lb}},r_{\mathrm{ub}}].
    \]
    The true parameter satisfies \(\theta^0=(q^0,r^0)\in\Theta\).

    \item[(A2)]
    There exist constants \(0<\kappa_p<\kappa_p'<1\) such that $p_{k,m}\in[\kappa_p,\kappa_p']$ for all \(k\in\{1,\ldots,K\}\) and all markers \(m\).

    \item[(A3)]
    There exist constants \(L_Q\in\mathbb N\), \(\eta_Q>0\), and
    \(\sigma_Q>0\) such that
    \[
    \liminf_{M\to\infty}
    \frac{1}{M}
    \#\mathcal I_M
    \geq \eta_Q,
    \]
    where \(\mathcal I_M\subseteq\{1,\ldots,M-L_Q+1\}\) is the set of indices
    \(i\) for which there exist marker indices $m_1(i),\ldots,m_{K-1}(i)\in\{i,\ldots,i+L_Q-1\}$
    such that
    \[
    \sigma_{\min}
    \begin{pmatrix}
    1 & \cdots & 1 \\
    p_{1,m_1(i)} & \cdots & p_{K,m_1(i)} \\
    \vdots & & \vdots \\
    p_{1,m_{K-1}(i)} & \cdots & p_{K,m_{K-1}(i)}
    \end{pmatrix}
    \geq \sigma_Q.
    \]
    Here, \(\sigma_{\min}(\cdot)\) denotes the smallest singular value.

    \item[(A4)]
    Define, for \(m\geq 2\),
    \[
    \Delta_m(q)
    :=
    \sum_{k=1}^K q_k p_{k,m-1}p_{k,m}
    -
    \left(\sum_{k=1}^K q_k p_{k,m-1}\right)
    \left(\sum_{k=1}^K q_k p_{k,m}\right).
    \]
    There exist constants \(\eta_\Delta>0\) and \(\kappa_\Delta>0\) such that
    \[
    \liminf_{M\to\infty}
    \frac{1}{M}
    \#\left\{
    m\in\{2,\ldots,M\}:
    |\Delta_m(q^0)|\geq \kappa_\Delta
    \right\}
    \geq \eta_\Delta.
    \]

    \item[(A5)]
    There exist constants \(0<\kappa_d<d_{\mathrm{up}}<\infty\) such that $d_m\in[\kappa_d,d_{{up}}]$
    for all $m$.

    \item[(A6)]
    The number of chromosomes \(C\in\mathbb N\) is fixed.
\end{itemize}
\end{assumption}

Figure \ref{fig:overview_HMM} gives an overview of the results in this paper.
\begin{figure}[ht]
\centering
\captionsetup{skip=12pt}

\begin{tikzpicture}[node distance=0.8cm and 1.2cm]

\node (unique) [process] {
    \textbf{Unique MLE}
};

\node (consistency) [process, below=of unique] {
    \textbf{Consistency}
};

\node (clt) [process, below=of consistency] {
    \textbf{CLT}
};

\node (asymp) [process, below=of clt] {
    \textbf{Asymptotic Distribution}\\
    \textbf{of the Test Statistic}
};

\node (ass_unique) [process, fill=gray!20, right=of unique] {
    Assumption \ref{ass:consistency:inh}
};

\node (ass_consistency) [process, fill=gray!20, right=of consistency] {
    Assumption \ref{ass:consistency:inh}
};

\node (ass_clt) [process, fill=gray!20, right=of clt] {
    Assumption \ref{ass:consistency:inh}\\
    invertible Fisher\\
    information
};

\node (ass_asymp) [process,  fill=gray!20,right=of asymp] {
    Assumption \ref{ass:consistency:inh}
};

\draw [arrow] (unique) -- (consistency);
\draw [arrow] (consistency) -- (clt);
\draw [arrow] (clt) -- (asymp);

\draw [arrow] (ass_unique) -- (unique);
\draw [arrow] (ass_consistency) -- (consistency);
\draw [arrow] (ass_clt) -- (clt);
\draw [arrow] (ass_asymp) -- (asymp);

\end{tikzpicture}

\caption{Overview of results in this section and their requirements.}
\label{fig:overview_HMM}
\end{figure}

We also state the theory under the following conditions concerning the data. 

\begin{remark}[Assumptions]
    We assume that the data is haploid and that we only have bi-allelic markers. Additionally, we only consider one individual. However, extending the theory to more general cases is straightforward. %The most restrictive assumption that the data is haploid is considered in Remark \ref{rem:diploid} in more detail. 
\end{remark}

We start with the consistency of the MLE (Theorem \ref{cons:markers}). 
\begin{theorem}[Consistency of the MLE]\label{cons:markers}
   Let Assumption \ref{ass:consistency:inh} hold. 
   Then, for the MLE
   $$\left(\hat Q^{C,M}, \hat R^{C,M}\right) := \textup{argmax}\{(q, r) \mapsto\ell((x_{1,1},..., x_{C,M}), (q,r))\}$$
   it holds
   \begin{align*}
       \mathbb P\left(\lim_{M_{total} \to \infty} \bigg|\left(\hat Q^{C,M}, \hat R^{C,M}\right) - \left(q^0, r^0\right)\bigg| \geq \epsilon\right) = 0
   \end{align*}
   for any $\epsilon > 0.$
\end{theorem}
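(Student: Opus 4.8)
The plan is to follow the classical three-step scheme for consistency of an $M$-estimator, adapted to the time-inhomogeneous hidden Markov structure of Definition~\ref{def:lm}. First, establish a uniform law of large numbers for the normalized log-likelihood $(q,r)\mapsto\ell((x),(q,r))$ over the compact set $\Theta$; second, identify the limiting contrast and show it is uniquely maximized at $(q^0,r^0)$; third, conclude by the usual argmax-continuity argument. Two features of the model make this feasible. The hidden chain restarts from $q$ at the start of each chromosome, so the observation blocks $(X_{c,1},\dots,X_{c,M_c})$, $c=1,\dots,C$, are mutually independent. And on $\Theta$ the emission success probability $q_{Z_{c,m}}p_{c,Z_{c,m},m}$ lies in $[\kappa_q\kappa_p,\kappa_q'\kappa_p']\subset(0,1)$ by (A1)--(A2), so $\log\mathbb P_{q,r}(x\mid z)$, the kernel~\eqref{eq:tp}, and their $(q,r)$-derivatives are bounded uniformly in $(m,c)$ and over $\Theta$ (using also (A5) to control the $r$-derivative of $e^{-d_{c,m}r}$); this gives the integrability and equicontinuity needed throughout.

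For the first step I would start from the predictive decomposition
\[
\ell((x),(q,r))=\frac{1}{M_{total}}\sum_{c=1}^{C}\sum_{m=1}^{M_c}\log\mathbb P_{q,r}\big(X_{c,m}=x_{c,m}\mid X_{c,1:m-1}=x_{c,1:m-1}\big),
\]
and prove an exponential-forgetting estimate for the prediction filter: by (A4)--(A5) the kernel~\eqref{eq:tp} has a positive minorization constant uniformly over $\Theta$, so the $m$-th conditional log-density differs from the same quantity computed from only the last $O(\log m)$ observations by an error that is $O(\rho^{m})$, $\rho<1$, uniformly in $(q,r)$, and these windowing errors are negligible after normalization. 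Replacing each term by its windowed version turns the normalized sum into an average of a bounded triangular array that is independent across chromosomes and has $O(\log m)$-range dependence within a chromosome; a blocking argument combined with a bounded-differences inequality then gives, for each fixed $(q,r)$, that $\ell((X),(q,r))-\mathbb E\,\ell((X),(q,r))\to 0$ almost surely as $M_{total}\to\infty$. The uniform Lipschitz bound from the previous paragraph and compactness of $\Theta$ upgrade this to uniform convergence, $\sup_{(q,r)\in\Theta}|\ell((X),(q,r))-\bar\ell_{M_{total}}(q,r)|\to 0$ a.s., where $\bar\ell_M(q,r):=\mathbb E\,\ell((X),(q,r))$.

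For the second and third steps, conditioning successively and applying Jensen's inequality as $\mathbb E[\log(p_{q,r}/p_{q^0,r^0})\mid X_{c,1:m-1}]\le\log\mathbb E[p_{q,r}/p_{q^0,r^0}\mid X_{c,1:m-1}]=0$ (with $p_{q,r}$ the model-$(q,r)$ conditional density of $X_{c,m}$ given $X_{c,1:m-1}$) shows that every summand of $\bar\ell_M(q^0,r^0)-\bar\ell_M(q,r)$ is nonnegative, so $\liminf_M(\bar\ell_M(q^0,r^0)-\bar\ell_M(q,r))\ge 0$, with the deficit at marker $m$ vanishing only if the one- and two-dimensional laws of $(X_{c,m-1},X_{c,m})$ coincide under $(q,r)$ and under $(q^0,r^0)$. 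Since $\mathbb P_{q,r}(X_{c,m}=1)=\sum_k q_k^2 p_{c,k,m}$ and, by (A3), there are infinitely many markers at which the coordinates of $(p_{c,1,m},\dots,p_{c,K,m})$ are pairwise distinct, matching these marginals pins down $(q_1^2,\dots,q_K^2)$ and hence $q=q^0$ on the simplex; given $q=q^0$, $\mathbb P_{q,r}(X_{c,m-1}=1,X_{c,m}=1)$ is affine and strictly monotone in $e^{-d_{c,m}r}$ at such informative pairs, so matching the two-dimensional marginals forces $r=r^0$ (using $d_{c,m}\ge\kappa_d>0$). Thus the per-marker deficit stays bounded away from $0$ along a non-negligible set of markers whenever $(q,r)\ne(q^0,r^0)$, which with the Lipschitz bound yields, for every $\epsilon>0$, some $\delta_\epsilon>0$ with $\liminf_M\inf_{(q,r)\notin B_\epsilon(q^0,r^0)}(\bar\ell_M(q^0,r^0)-\bar\ell_M(q,r))\ge\delta_\epsilon$. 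On the a.s. event of the first step, for large $M_{total}$ one has $\ell((X),(\hat Q^{C,M},\hat R^{C,M}))\ge\ell((X),(q^0,r^0))\ge\bar\ell_M(q^0,r^0)-o(1)$; were $(\hat Q^{C,M},\hat R^{C,M})\notin B_\epsilon(q^0,r^0)$, the left side would be at most $\bar\ell_M(\hat Q^{C,M},\hat R^{C,M})+o(1)\le\bar\ell_M(q^0,r^0)-\delta_\epsilon+o(1)$, a contradiction. Hence $(\hat Q^{C,M},\hat R^{C,M})\to(q^0,r^0)$ a.s., which gives the stated conclusion.

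The main obstacle is the first step. Both the transition kernel (via $d_{c,m}$) and the emission law (via $p_{c,k,m}$) depend on the marker, so the hidden chain is genuinely time-inhomogeneous and neither Birkhoff's ergodic theorem nor the HMM consistency theorems cited in the introduction apply; one has to prove a law of large numbers for the normalized log-likelihood of an inhomogeneous HMM directly, and Assumptions (A4)--(A5) are precisely what enables this --- (A5) gives a marker-uniform minorization/geometric-forgetting bound on the filter over all of $\Theta$, and (A4) controls the accumulated mixing so that the windowing error vanishes after normalization. A secondary subtlety is that $\bar\ell_M(q,r)$ need not converge as $M_{total}\to\infty$ without an averaging hypothesis on $\{d_{c,m}\}$ and $\{p_{c,k,m}\}$, so the argument must be run with $\liminf$/$\limsup$, and the quantitative identifiability in the second step must hold along the infinite marker subsequences provided by (A3).
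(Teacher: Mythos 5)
Your overall skeleton --- uniform convergence of the normalized log-likelihood over the compact $\Theta$ via exponential forgetting of the prediction filter plus a Lipschitz/compactness argument, followed by identification of the limiting contrast and the standard argmax step --- is the same route the paper takes (its Lemmas \ref{lemma:3.10}, \ref{3.11} and \ref{3.9} are exactly the filter-forgetting, Lipschitz and uniform-convergence steps, adapted from van Handel's lecture notes, and the final argmax argument in the proof of Theorem \ref{cons:markers} is the one you describe). The genuine gap is in your identification step. You argue that the Jensen deficit at marker $m$ vanishes only if the one- and two-dimensional marginals of $(X_{c,m-1},X_{c,m})$ agree under $(q,r)$ and $(q^0,r^0)$, and that matching $\mathbb P_{q,r}(X_{c,m}=1)=\sum_k q_k^2 p_{c,k,m}$ across the markers supplied by (A3) pins down $q$. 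Neither half is justified as stated. First, the per-marker Jensen gap is the expected Kullback--Leibler divergence between the \emph{full conditional} laws of $X_{c,m}$ given $X_{c,1:m-1}$; its vanishing for all $m$ tells you the entire observation laws $\mathbb P_{q,r}|_{(X_m)_m}$ and $\mathbb P_{q^0,r^0}|_{(X_m)_m}$ coincide, not merely low-dimensional marginals, and the hard problem is then to deduce $(q,r)=(q^0,r^0)$ from equality of these laws. Second, even granting the reduction to marginals, (A3) only asserts pairwise distinctness $p_{k,m}\neq p_{\ell,m}$ at infinitely many markers; the linear constraints $\sum_k\bigl(q_k^2-(q_k^0)^2\bigr)p_{c,k,m}=0$ determine $q$ only if the vectors $(p_{c,1,m},\dots,p_{c,K,m})$ affinely span a $(K-1)$-dimensional set, which pairwise distinctness does not guarantee for $K>2$ (and even for $K=2$ one needs the ratios $p_{1,m}/p_{2,m}$ to vary across markers). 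Similarly, strict monotonicity of the pair probability in $e^{-d_{c,m}r}$ requires a nondegeneracy of the weighted products $q_k^3p_{k,m-1}p_{k,m}$ that you assume implicitly.

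This is precisely the point where the paper does real work: its Lemma \ref{lemma:id} establishes identifiability of $(q,r)$ from the joint law of a finite block of consecutive observations via Kruskal's theorem on three-way tensor decompositions, and to do so it must impose an \emph{additional} linear-independence condition on the allele frequencies (and restricts the rigorous argument to $K=2$, checking full rank of the matrices $M_1,M_2,B_2$ explicitly). The paper then converts identifiability into asymptotic separation of the likelihoods not through a uniform lower bound on the Jensen deficit, but by constructing bounded window functions $h_i$ whose empirical averages converge to $1$ under $(q^0,r^0)$ and to $0$ under $(q,r)$ (Lemma \ref{lemma:6}, using the Azuma--Hoeffding-type concentration of Lemma \ref{azuma} and uniform ergodicity from Lemma \ref{lemma:erg}), giving mutual singularity of the two observation laws. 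To repair your proposal you would need either to import an identifiability result of Kruskal type (with the attendant extra hypothesis on the $p_{c,k,m}$, which (A1)--(A5) alone do not supply) or to strengthen (A3) to a spanning condition; as written, the claim that $(q,r)\neq(q^0,r^0)$ forces a per-marker deficit bounded away from zero along a positive fraction of markers does not follow from the stated assumptions.
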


Based on Theorem \ref{cons:markers}, we can infer a CLT for the MLE, if the true parameters $\left(q^0, r^0\right)$ are in the interior of the parameter space.

\begin{theorem}[Central Limit Theorem for the MLE]\label{th:CLT}
   Let Assumption \ref{ass:consistency:inh} hold and let the Fisher information 
$$J_{q^0, r^0} := -\lim_{M_{total} \to \infty} \mathbb E\left( \frac{\partial^2}{\partial (q,r)^2}\ell\left((X_{1,1},..., X_{C,M}), (q,r))\big|_{(q,r) = \left(q^0, r^0\right)}\right)\right)$$
be positive definite. Let the true parameters be in the interior of the parameter space. Then, it holds
    \begin{align*}
        \sqrt{M_{total}}\left((\hat Q^{C,M}, \hat R^{C,M}) - (q^0, r^0)\right) \xRightarrow[]{M_{total} \to \infty} \mathcal N\left(0, J_{q^0, r^0}^{-1}\right).
    \end{align*}
\end{theorem}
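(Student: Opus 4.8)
The plan is to derive the central limit theorem from the consistency result (Theorem~\ref{cons:markers}) via the classical Taylor-expansion argument for M-estimators, adapted to the time-inhomogeneous HMM setting. First I would set $\theta = (q,r)$ and $\theta^0 = (q^0, r^0)$, and work with the score function $S_{M_{total}}(\theta) := \nabla_\theta \ell((x_{1,1},\dots,x_{C,M}), \theta)$. Since $\theta^0$ lies in the interior of $\Theta$ and $\hat\theta^{C,M} := (\hat Q^{C,M}, \hat R^{C,M})$ is consistent, with probability tending to one the MLE is an interior point and therefore satisfies the first-order condition $S_{M_{total}}(\hat\theta^{C,M}) = 0$. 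A second-order Taylor expansion of the score around $\theta^0$ then gives
\begin{align*}
0 = S_{M_{total}}(\hat\theta^{C,M}) = S_{M_{total}}(\theta^0) + H_{M_{total}}(\tilde\theta)\,(\hat\theta^{C,M} - \theta^0),
\end{align*}
for some $\tilde\theta$ on the segment between $\hat\theta^{C,M}$ and $\theta^0$, where $H_{M_{total}}(\theta) := \nabla^2_\theta \ell((x_{1,1},\dots,x_{C,M}), \theta)$ is the normalized observed information. Rearranging, $\sqrt{M_{total}}(\hat\theta^{C,M} - \theta^0) = -H_{M_{total}}(\tilde\theta)^{-1}\,\sqrt{M_{total}}\,S_{M_{total}}(\theta^0)$, so it remains to establish (i) a CLT for the normalized score $\sqrt{M_{total}}\,S_{M_{total}}(\theta^0) \Rightarrow \mathcal N(0, J_{q^0,r^0})$, and (ii) a uniform law of large numbers $H_{M_{total}}(\tilde\theta) \to -J_{q^0,r^0}$ in probability, using consistency of $\tilde\theta$ and equicontinuity of $\theta \mapsto H_{M_{total}}(\theta)$. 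Slutsky's theorem and the continuous mapping theorem (invertibility of $J_{q^0,r^0}$ by assumption) then yield the claim.

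For step (i), the main work is to express the score as a sum over markers of terms that form a suitable mixing / martingale-type array, despite the Markov chain on the hidden states $Z$ being time-inhomogeneous. I would use the standard HMM device of writing the log-likelihood as a telescoping sum of conditional log-likelihood increments, $\ell \cdot M_{total} = \sum_{c,m} \log \mathbb P_{q^0,r^0}(X_{c,m} = x_{c,m} \mid X_{1:c,1:m-1})$, differentiate under the expectation, and control the dependence of each increment's derivative on the distant past via geometric forgetting of the filter. The key structural fact that makes forgetting work here is that the hidden chain, while inhomogeneous, keeps the \emph{same} stationary distribution $q^0$ at every locus, and Assumption~(A4) (together with (A5), which bounds $d_{c,m}$ away from zero so each $\lambda_{c,m}$ is bounded away from $1$ and hence the product decays geometrically in $M_{total}$) guarantees uniform contraction of the prediction filter. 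This gives an approximation of the score by an $L^2$-bounded martingale difference array with respect to the natural filtration, to which a martingale CLT (e.g.\ the Hall--Heyde theorem) applies; the limiting variance is identified as $J_{q^0,r^0}$ via a standard information-identity computation, with the limit existing by the assumed convergence in the definition of $J_{q^0,r^0}$ and a Cesàro argument over markers using (A2)--(A3).

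For step (ii), I would show that $\mathbb E[H_{M_{total}}(\theta^0)] \to -J_{q^0,r^0}$ (immediate from the definition of the Fisher information in the theorem statement), that $H_{M_{total}}(\theta^0)$ concentrates around its mean by an ergodic-type argument analogous to step (i) applied to the second derivatives, and that $\sup_{\theta \in \Theta}\|H_{M_{total}}(\theta) - \mathbb E[H_{M_{total}}(\theta)]\| \to 0$ in probability; combined with stochastic equicontinuity of $\theta \mapsto H_{M_{total}}(\theta)$ (which follows from uniform bounds on third derivatives of the per-marker log-likelihood increments, using compactness of $\Theta$ from (A1) and the fact that emission and transition probabilities are bounded away from $0$ and $1$ by (A2) and $r_{lb} > 0$) and consistency of $\tilde\theta$, this yields $H_{M_{total}}(\tilde\theta) \to -J_{q^0,r^0}$. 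I expect the main obstacle to be making the geometric-forgetting / exponential-mixing estimates rigorous for the time-inhomogeneous chain: one must show that derivatives of the conditional likelihood increments inherit the forgetting property and are uniformly integrable, which requires careful propagation of the filter-stability bounds through the differentiation and a uniform-in-$\theta$ version of these bounds on a neighborhood of $\theta^0$. Once that machinery is in place, the martingale CLT and the uniform LLN are relatively routine, and assembling them via Slutsky gives the result.
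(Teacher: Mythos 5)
Your proposal is correct and follows essentially the same route as the paper: consistency plus the interior-point first-order condition, a Taylor expansion of the score, a martingale CLT (Hall--Heyde, Theorem 3.2) for the normalized score exploiting the forgetting properties of the inhomogeneous hidden chain, convergence of the integrated observed information to $J_{q^0,r^0}$, and Slutsky. The only cosmetic difference is that the paper writes the Taylor remainder in integral form, $\int_0^1 \nabla^2\ell\bigl(\theta^0 + (\hat\theta^M-\theta^0)\xi\bigr)\,d\xi$, rather than evaluating the Hessian at an intermediate point $\tilde\theta$.
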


\begin{remark}
    \cite{douc2005non} has proven the invertibility of the Fisher Information under certain constraints for stationary HMMs, i.e. in our case, if the allele frequencies and the distance between the markers are identical for every marker. This also means that the markers are either all on different chromosomes or on the same chromosome.  {Since there is no general theory about the invertibility of the Fisher Information yet, we as \cite{dean2014parameter}, just assume that it is invertible. For the application of the test, this is not relevant. }
\end{remark}

 \textcolor{black}{Under the usual constraints, according to \cite{wilks1938large}, it would hold
    \begin{align*}
\Lambda \xRightarrow[]{M_{total} \to \infty}\chi^2(1).        
    \end{align*}
However, the application of this result here is not possible as under $H_0$ the parameter $r$ is on the boundary of the parameter space. Hence, we use Theorem 3 by \cite{self1987asymptotic}, which can be used to calculate the asymptotic distribution of the test statistic $\Lambda,$ if the true parameter is on the boundary of the parameter space. According to \cite{self1987asymptotic}, it also holds 
    \begin{align*}
\Lambda \xRightarrow[]{M_{total} \to \infty} \frac{1}{2}\chi^2(0) + \frac{1}{2}\chi^2(1)        
    \end{align*}
    under the null hypothesis of \eqref{def:st} and some additional assumptions.  
Since it holds $\chi^2(0) = \delta_0$ and 
$$\mathbb P\left(\frac{1}{2}\chi^2(0) + \frac{1}{2}\chi^2(1) > c\right) = \frac{1}{2}\mathbb P(\chi^2(1) > c),$$
we infer that the critical value is $\chi^2_{1-2\alpha}.$ In \cite{wilks1938large} classical theory, which holds if the parameter is not on the boundary of the parameter space, the critical value would be $\chi^2_{1-\alpha},$ i.e. more conservative.}

\section{Application to Data}

 In this section, we first evaluate the performance of the test from Definition \ref{def:test} by using simulated data. Afterwards, we apply the test to data from \cite{10002015global} and compare the uncertainty of the MLEs in the Linkage Model to the ones in the Admixture Model.

\subsection{Simulated Data}

To evaluate the statistical test in terms of Type 1 and Type 2  errors, I simulated data under both the Linkage Model and the Admixture Model for different values of $M$, $r$, and the marker distances $(d_{c,m})_{c = 1, \dots, C,\ m = 1, \dots, M_c - 1}$. Specifically, I simulated $M = 50, 100, 200, 500, 1000, 10000$ markers on a single chromosome with $d_{1,m} = d_{1,m+1} =: d$ for all $m \in \{1,..., M-1\}$, where $d \in \{0.1, 1,2, 10\}$ and $r \in \{0.1, 1, 10, \infty\}$. We evaluate the performance of the test for both, diploid and haploid individuals and for $K = 2, 5$. The significance level was set to $0.05$.

I computed the Type 1 and Type 2  error rates by repeating the experiment 100 times for each possible combination of $r$ and $d$. The results are shown in Figure~\ref{fig:evaluation}.

\begin{figure}[H]
    \centering
    \begin{minipage}{0.45\textwidth}
        \centering
        \textbf{(A)}\\
        \includegraphics[width=\linewidth]{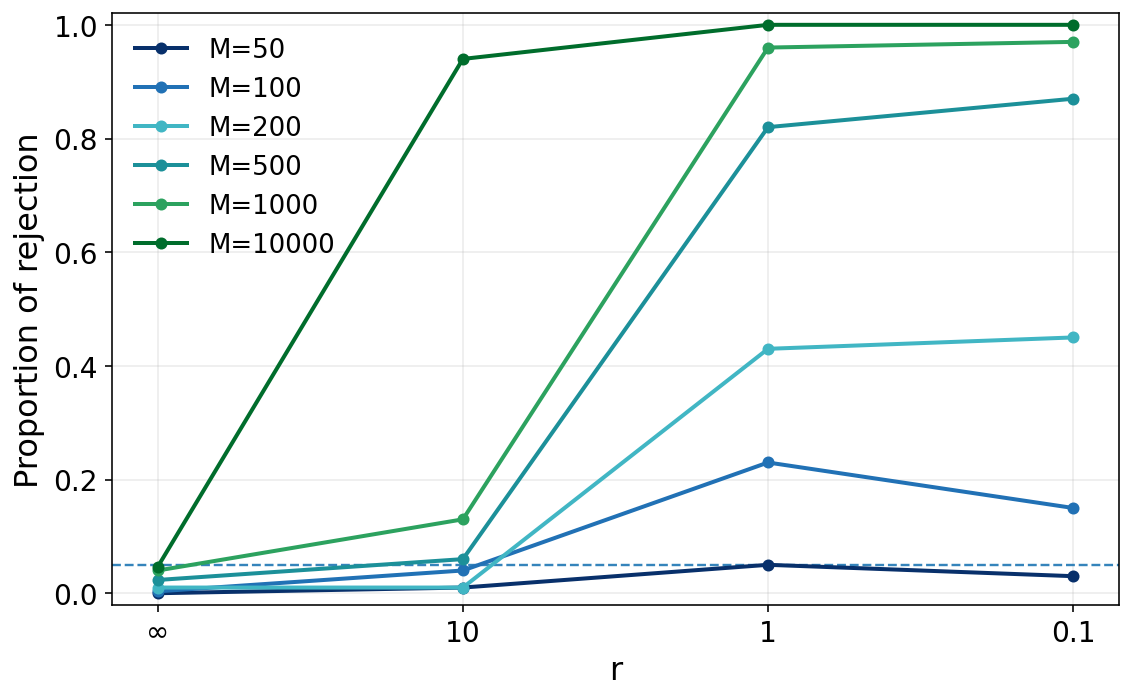}
    \end{minipage}
    \hfill
    \begin{minipage}{0.45\textwidth}
        \centering
        \textbf{(B)}\\
        \includegraphics[width=\linewidth]{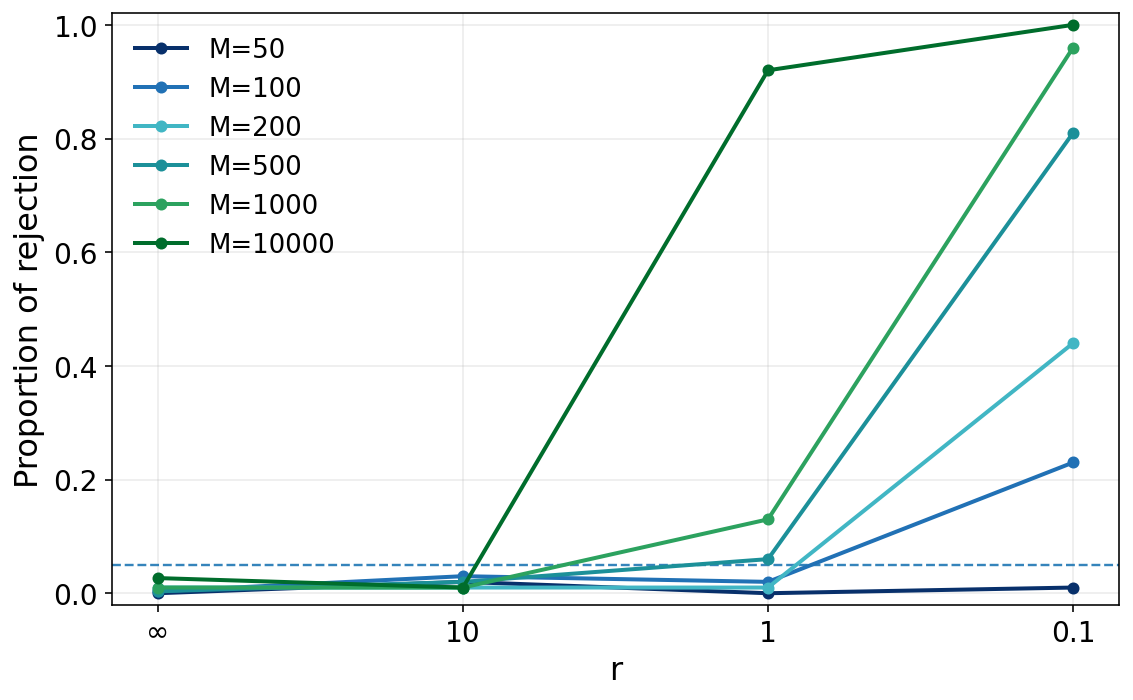}
    \end{minipage}\\[0.8em]

    \begin{minipage}{0.45\textwidth}
        \centering
        \textbf{(C)}\\
        \includegraphics[width=\linewidth]{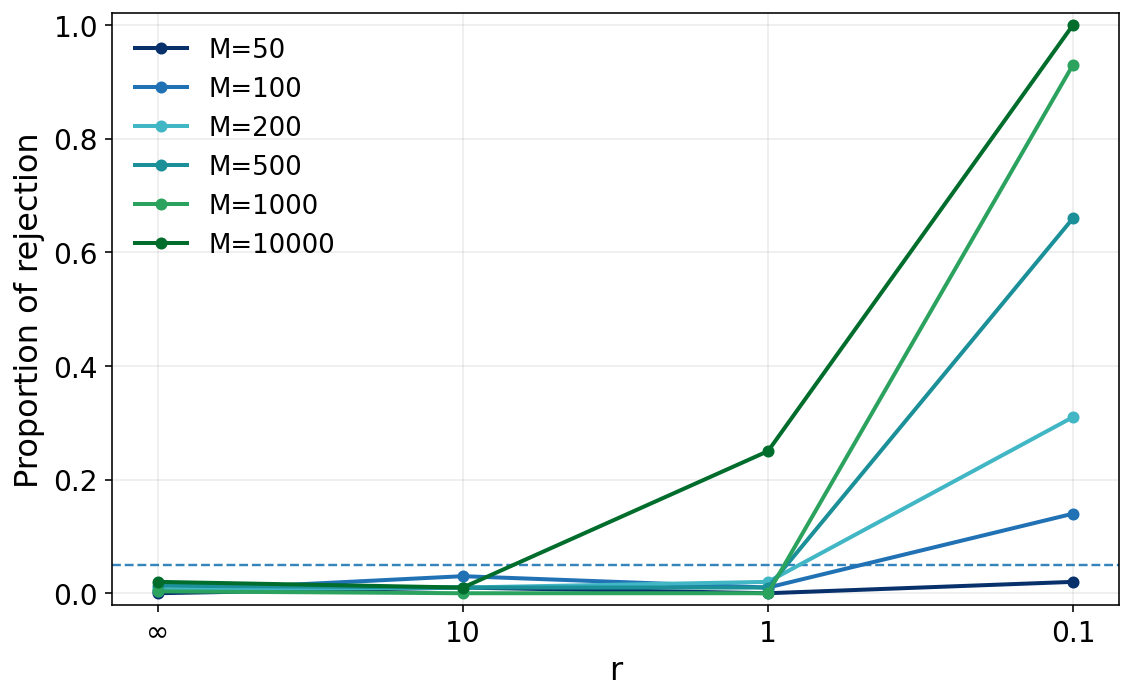}
    \end{minipage}
    \hfill
    \begin{minipage}{0.45\textwidth}
        \centering
        \textbf{(D)}\\
        \includegraphics[width=\linewidth]{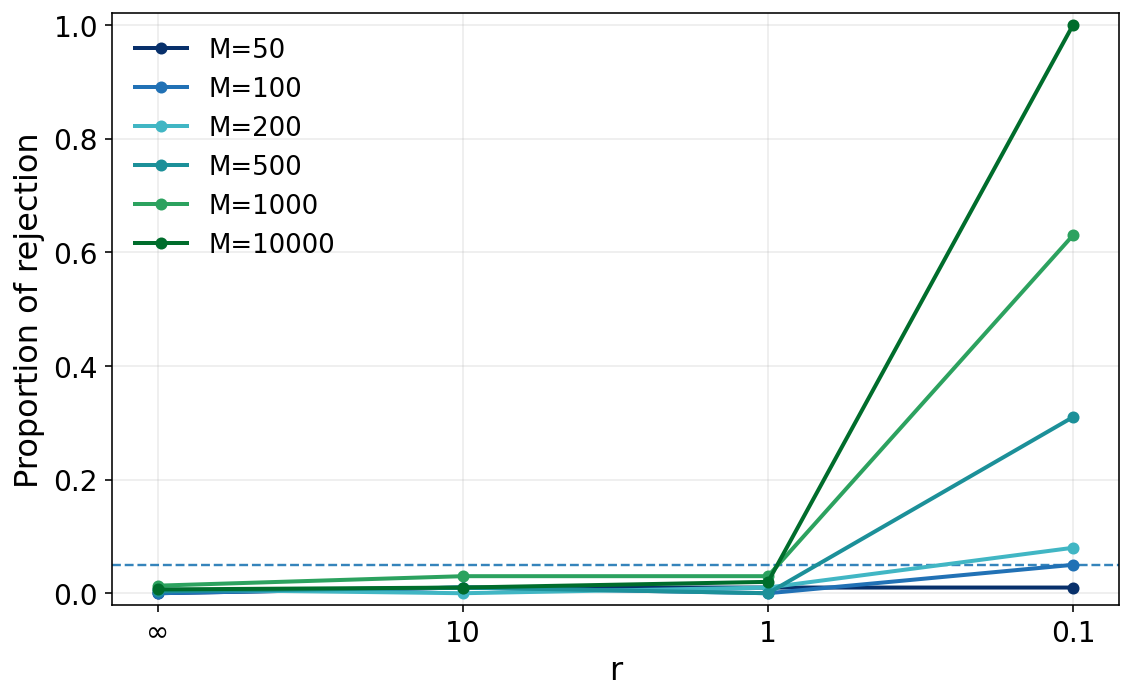}
    \end{minipage}

    \caption{Evaluation of the statistical test by using simulated data for different values of $r$ and $d$ for $K = 2$. The true ancestry was $q^0 = (0.2, 0.8).$ (A) $d_m = 0.1$, (B) $d_m = 1$, (C) $d_m = 2$, (D) $d_m = 10.$}
    \label{fig:evaluation}
\end{figure}
 {
Figure~\ref{fig:evaluation} shows that the statistical test is a level-$\alpha$ test, even for $M = 50$. Moreover, for small values of $r^0 d$, the power of the test is high and, as expected, increases with $M$. However, for large values of $r^0 d$, the power remains low even when $M = 10000$. The reason is that $\exp(-r^0d) \approx 10^{-5} \approx 0$ when $r^0d = 10$. In that case, we have
\begin{align*}
    \mathbb P(X_m = x)
    &= \sum_{k=1}^K \mathbb P(X_m = x \mid Z_m = k)\mathbb P(Z_m = k) \\
    &\approx \sum_{k=1}^K p_{k,m} q_k = \langle p_{\cdot,m}, q_{\cdot} \rangle.
\end{align*}
In particular, in this setting the test is unable to distinguish between the two models.}

 {Figure \ref{fig:evaluation_diploid} shows the power of the statistical test for the same cases as in Figure \ref{fig:evaluation}, but for diploid individuals. We see that the power is slightly higher than in the haploid case.  }
\begin{figure}[H]
    \begin{minipage}{0.45\textwidth}
        \centering
        \textbf{(A)}\\
        \includegraphics[width=\linewidth]{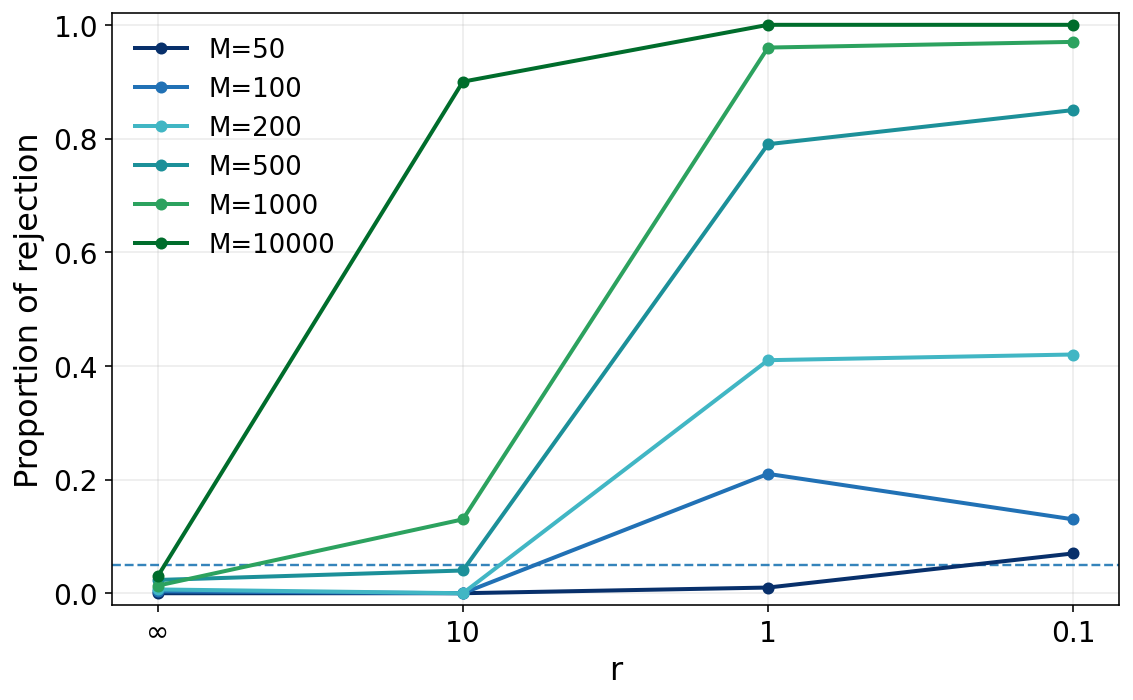}
    \end{minipage}
    \hfill
    \begin{minipage}{0.45\textwidth}
        \centering
        \textbf{(B)}\\
        \includegraphics[width=\linewidth]{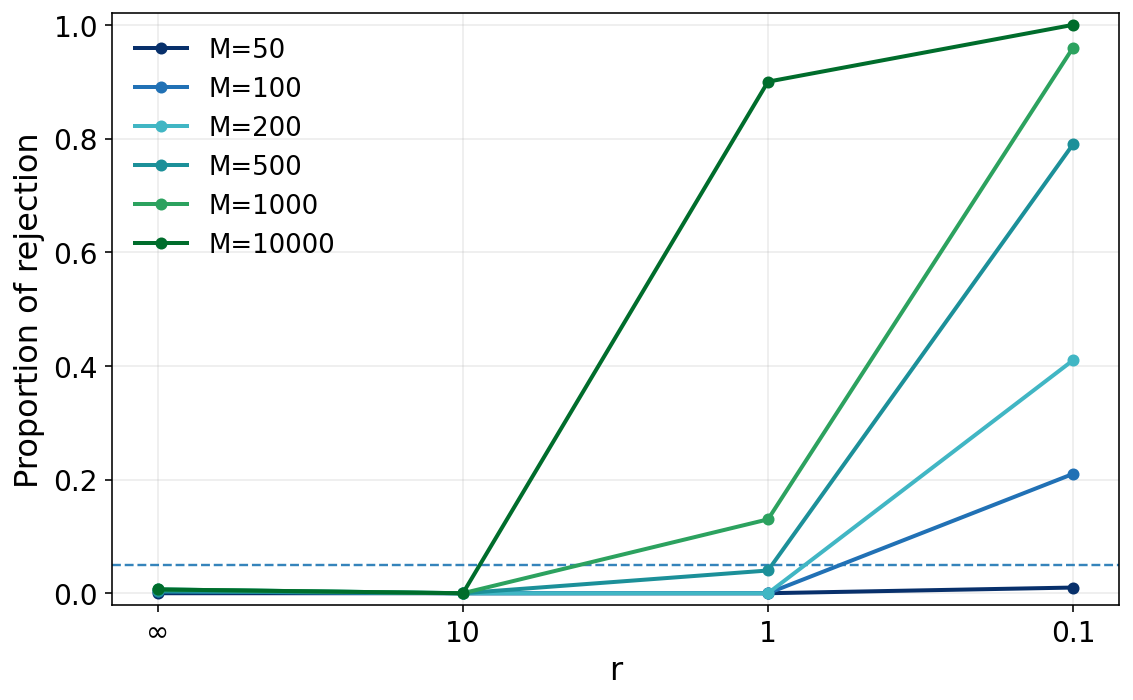}
    \end{minipage}\\[0.8em]

    \begin{minipage}{0.45\textwidth}
        \centering
        \textbf{(C)}\\
        \includegraphics[width=\linewidth]{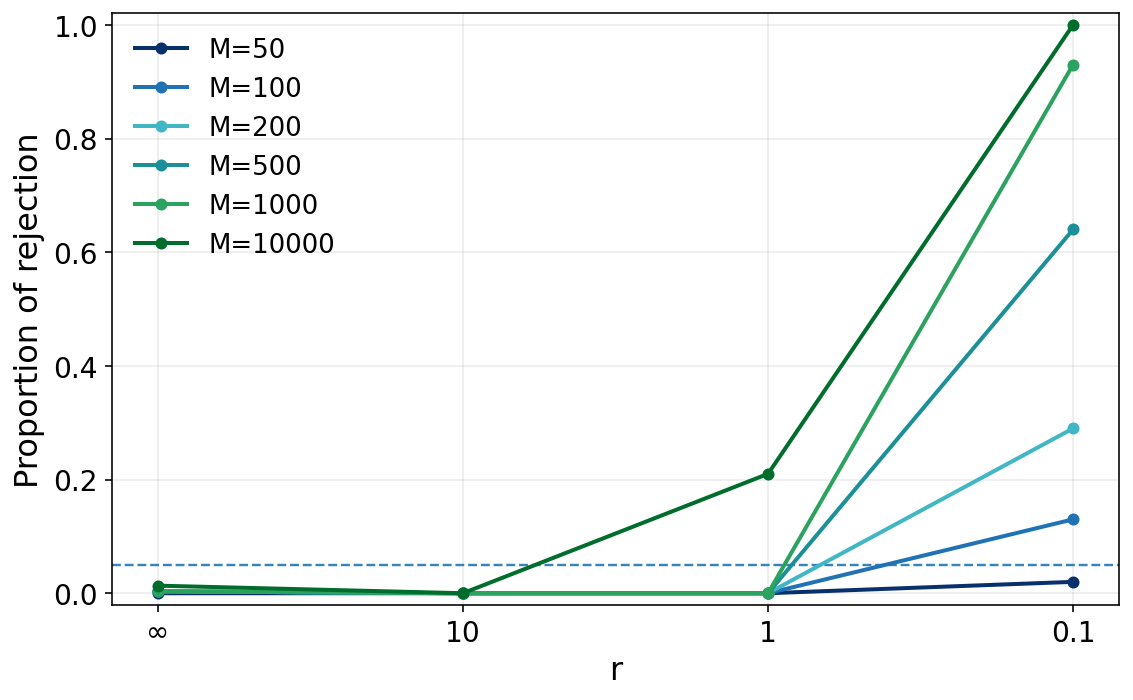}
    \end{minipage}
    \hfill
    \begin{minipage}{0.45\textwidth}
        \centering
        \textbf{(D)}\\
        \includegraphics[width=\linewidth]{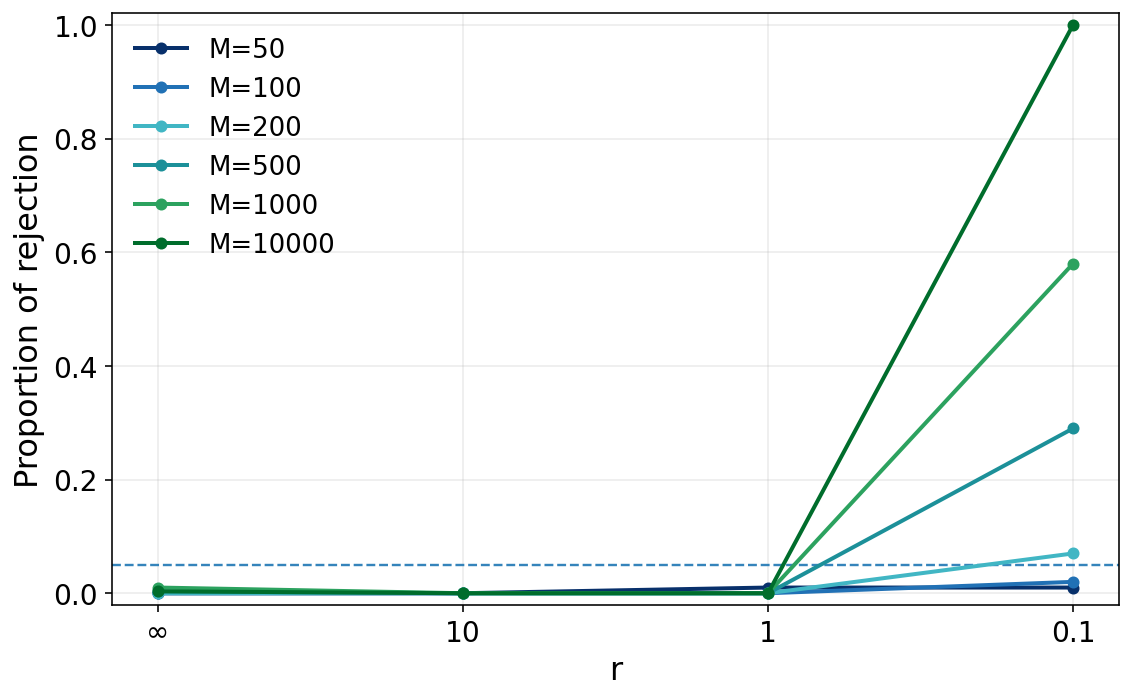}
    \end{minipage}

    \caption{Evaluation of the statistical test by using simulated data for different values of $r$ and $d$ for $K = 2$. We simulated diploid individuals. The true ancestry was $q^0 = (0.2, 0.8).$  (A) $d_m = 0.1$, (B) $d_m = 1$, (C) $d_m = 2$, (D) $d_m = 10.$}
    \label{fig:evaluation_diploid}
\end{figure}

 {Figure~\ref{fig:evaluation_K5} shows the performance of the statistical test for $K = 5$ in the haploid setting. Again, the results are comparable to those obtained for $K = 2$ in the haploid case. This suggests that the value of $K$ does not have a substantial impact on the performance of the statistical test. }

\begin{figure}[H]
    \centering
    \begin{minipage}{0.45\textwidth}
        \centering
        \textbf{(A)}\\
        \includegraphics[width=\linewidth]{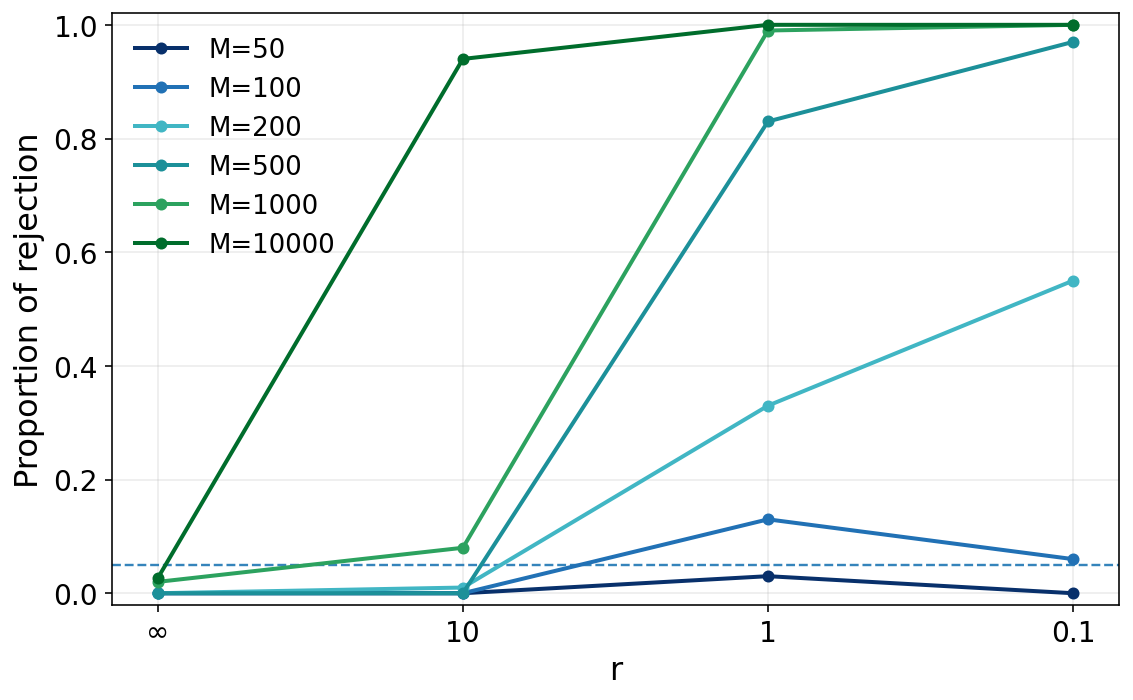}
    \end{minipage}
    \hfill
    \begin{minipage}{0.45\textwidth}
        \centering
        \textbf{(B)}\\
        \includegraphics[width=\linewidth]{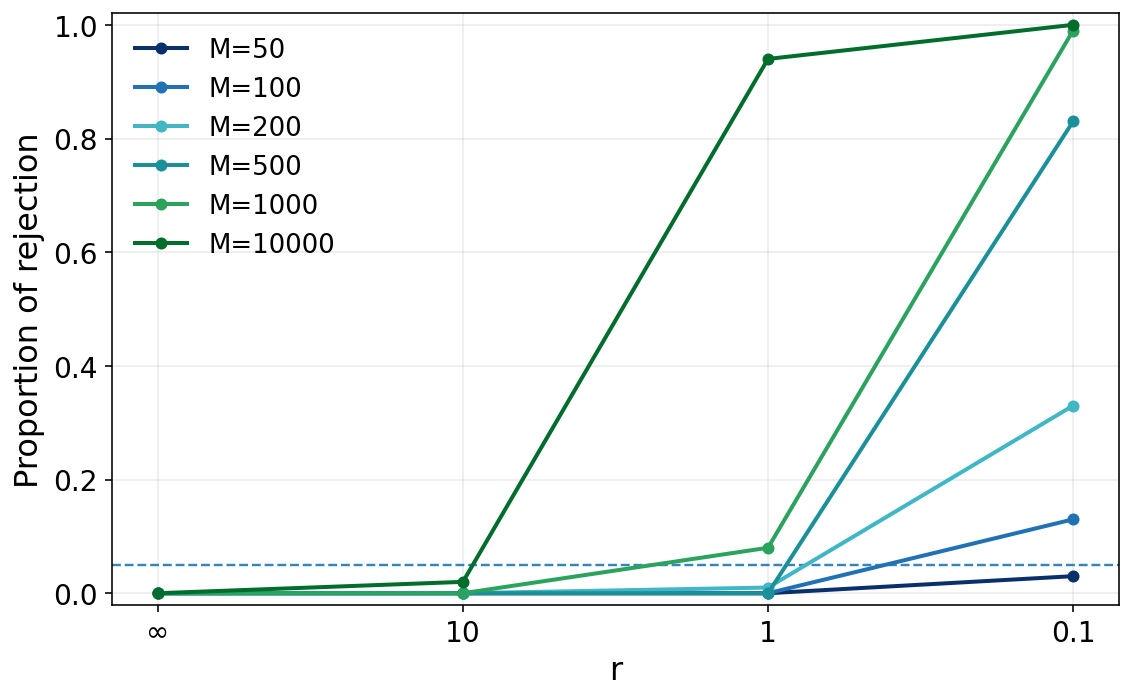}
    \end{minipage}\\[0.8em]

    \begin{minipage}{0.45\textwidth}
        \centering
        \textbf{(C)}\\
        \includegraphics[width=\linewidth]{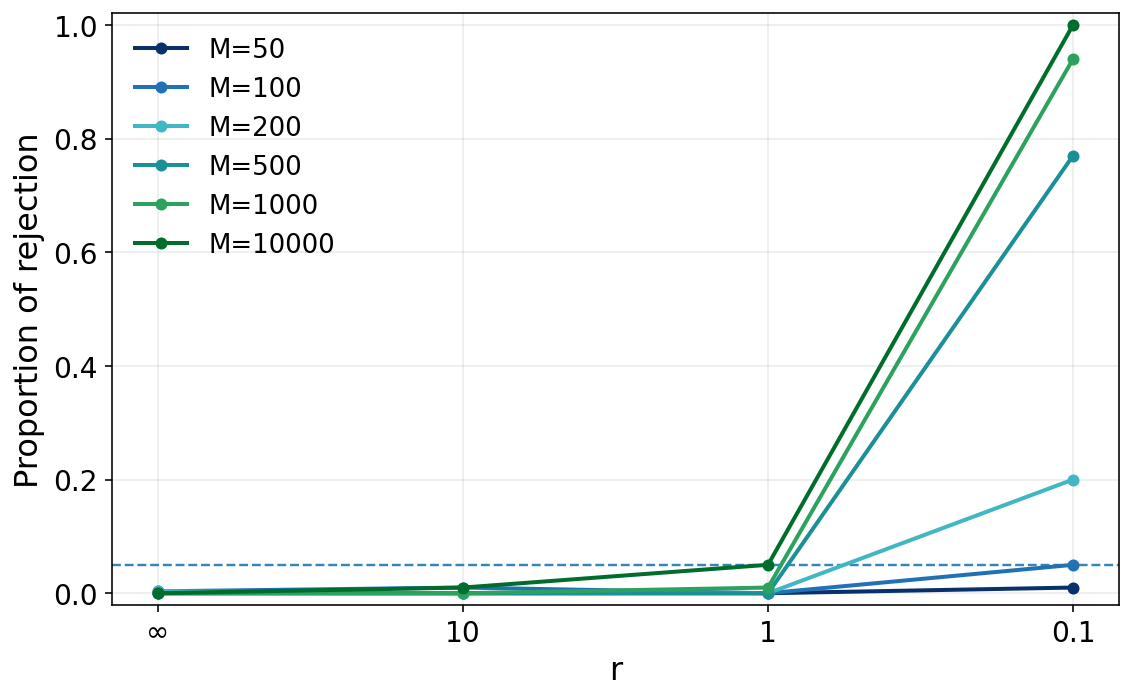}
    \end{minipage}
    \hfill
    \begin{minipage}{0.45\textwidth}
        \centering
        \textbf{(D)}\\
        \includegraphics[width=\linewidth]{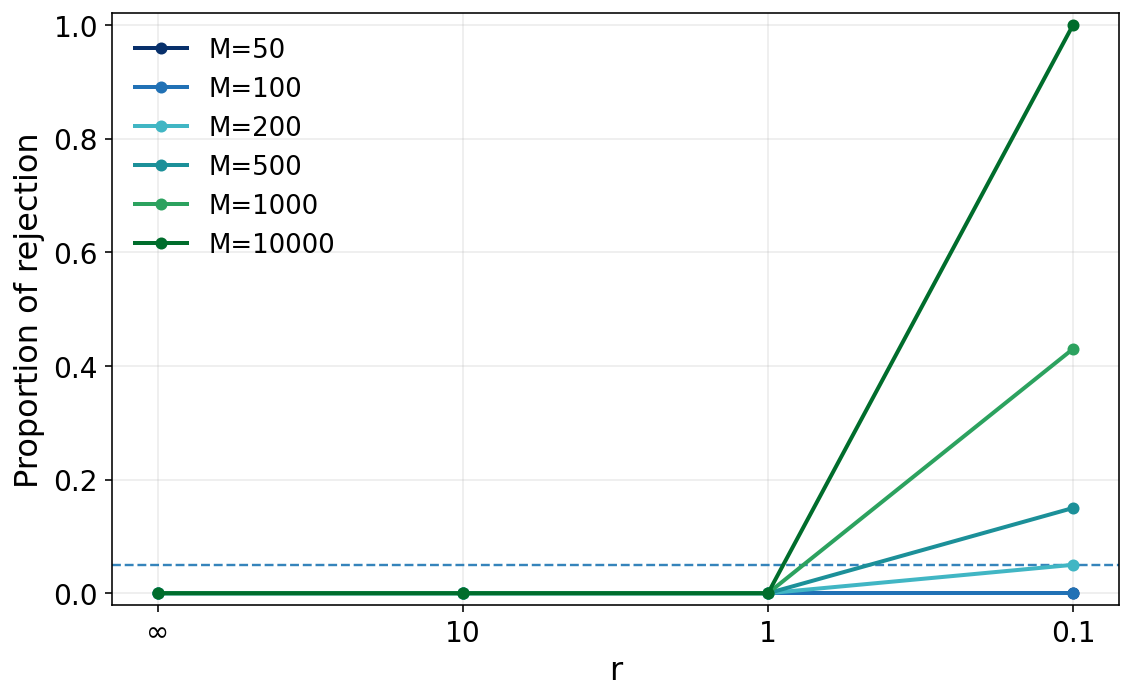}
    \end{minipage}

    \caption{Evaluation of the statistical test by using simulated data for different values of $r$ and $d$ for $K = 5$. The true ancestry was $q^0 = (0.2, 0.05, 0.25, 0.4, 0.1).$  (A) $d_m = 0.1$, (B) $d_m = 1$, (C) $d_m = 2$, (D) $d_m = 10.$}
    \label{fig:evaluation_K5}
\end{figure}

 {The evaluation of the statistical test for $K = 5$ and for diploid individuals are shown in Figure \ref{fig:evaluation_K5_diploid} in the Appendix.}

 {We also used msprime \cite{baumdicker2022efficient} to simulate data according to an island model with $K = 5$ islands. We chose the 50, 100 or 1000 markers with the hightest allele frequency difference between the populations, i.e. with the hightest
$$\max\{p_k: k =1,..., 5\} - \min\{p_k: k=1,..., 5\}.$$
With the resulting allele frequencies and distances between the markers, we then simulate the data according to the Linkage or the Admixture Model, respectively. The results are shown in Figure \ref{fig:msprime1} and Figure \ref{fig:msprime2}. Here, the power is much higher than in the previous simulations. The reason for this is that the values of $d$ are much smaller than in the simulation directly according to the model, which is shown in Figure \ref{fig:d1} in the appendix \ref{sec:msprime}. The migration rates were chosen to achieve $F_{ST} = 0.05$. Here, $F_{ST}$ is a property of the model  measuring genetic differentiation between
subpopulations relative to the total population \citep{holsinger2009genetics}. More precisely, let $\bar p := 1/K \sum_{k=1}^K p_k$. Then, it holds
$$F_{ST} = \frac{\frac 1 K \sum_{k=1}^K (p_k-\bar p)^2}{\bar p(1-\bar p)}.$$ The appendix \ref{sec:msprime}  contains a more detailed description of the simulation with msprime and for the simulations according to the model.}

\begin{figure}[H]
    \centering
    \begin{minipage}[t]{0.48\textwidth}
        \centering
        \includegraphics[width=\linewidth]{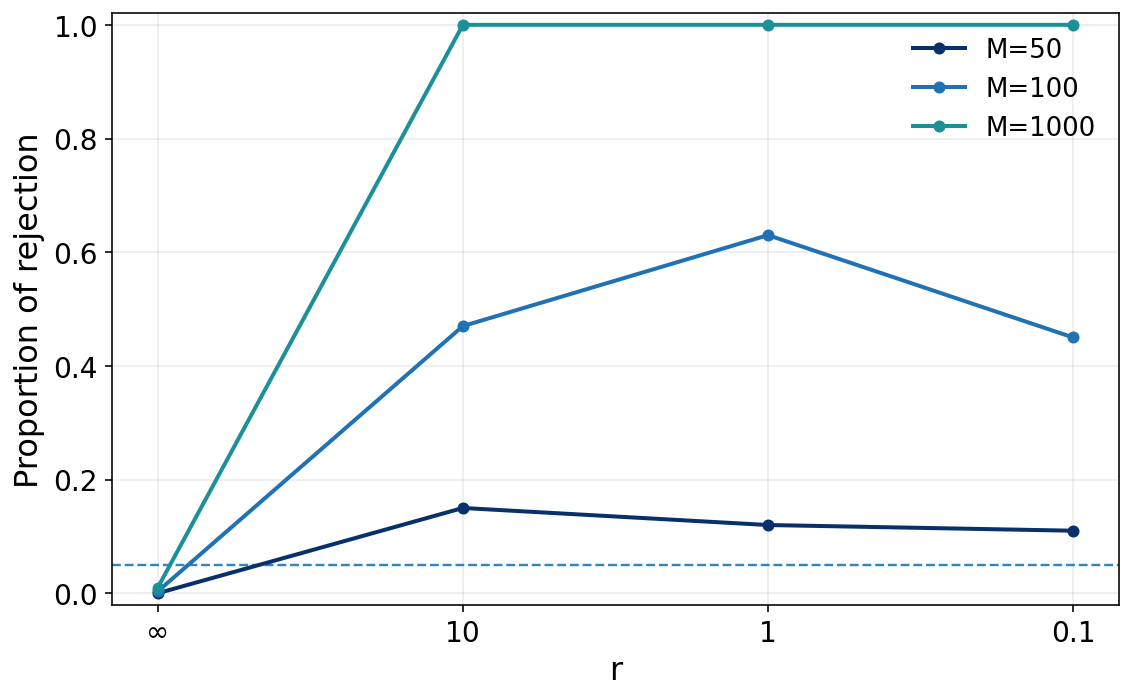}
        \caption{Evaluation of the statistical test by using simulated data for different values of $r$ and $d$ for $K = 5$,\textcolor{black}{$N_e = 1000$.} }
        \label{fig:msprime1}
    \end{minipage}
    \hfill
    \begin{minipage}[t]{0.48\textwidth}
        \centering
        \includegraphics[width=\linewidth]{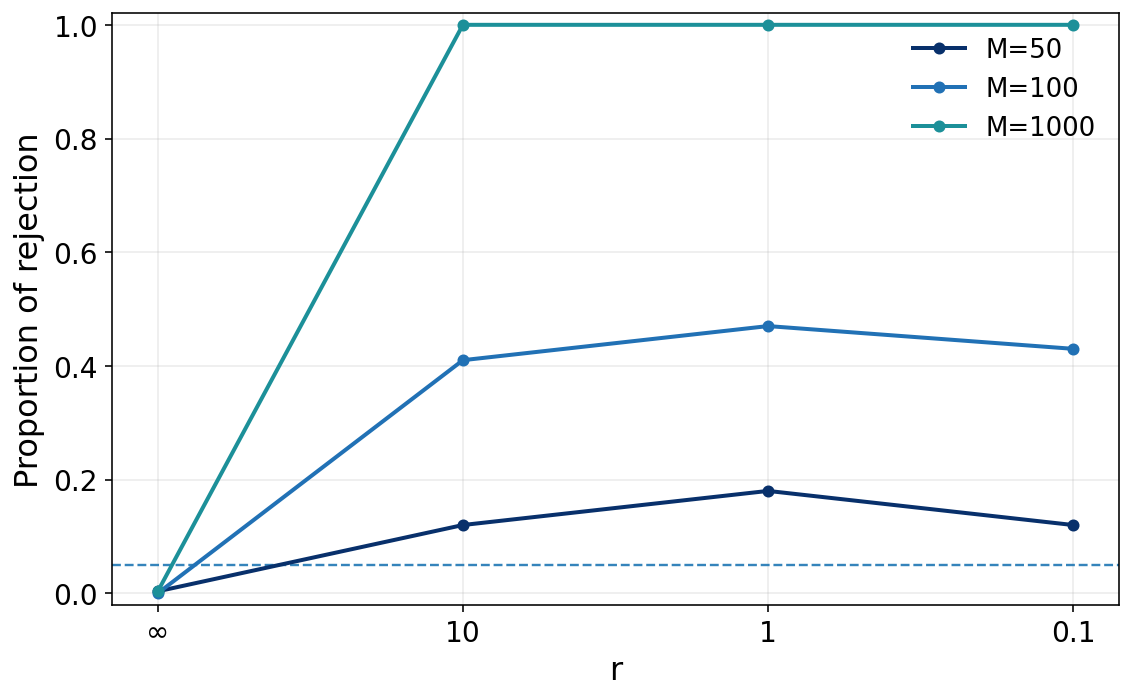}
        \caption{Evaluation of the statistical test by using simulated data for different values of $r$ and $d$ for $K = 5$,\textcolor{black}{$N_e = 5000$}.}
        \label{fig:msprime2}
    \end{minipage}
\end{figure}

\subsection{Real Data}

I also applied the statistical test to the data from \cite{10002015global} with $M = 350$ loci.  For the analysis, individuals from five populations were included: Africa (AFR), Europe (EUR), South Asia (SAS), East Asia (EAS), and Admixed Americans (AMR), i.e., $K = 5$.   {To select the loci, I first extracted every 2.000 SNP and then all bi-allelic loci, where both loci have at least $5\%$ frequency in every population. This then leads to 3340 markers. I uniformly chose $350$ markers out of these 3340 markers and apply the test to these 350 markers. The distances between the markers are shown in Figure \ref{fig:distance} in the appendix.} The code for this can be found on \href{https://github.com/CarolaHeinzel/LinkageModel}{GitHub}.

The results of the statistical test are shown in Figure \ref{fig:real}. In total, for 615 out of 2504 individuals (approximately 25\%), the null hypothesis - that the Admixture Model provides a better fit to the data - cannot be rejected.

\begin{figure}[h!]
    \centering
    \includegraphics[width=0.75\linewidth]{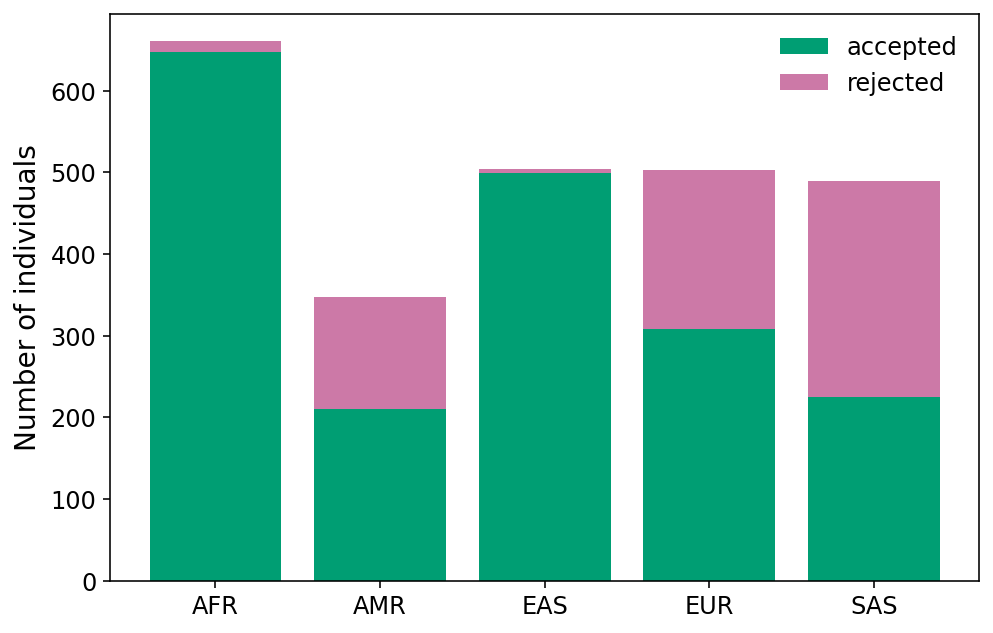}
    \caption{Results of the statistical test from Definition \ref{def:test} for the data from \cite{10002015global} with $M = 350$.}
    \label{fig:real}
\end{figure}

 {Interestingly, for EUR (38.77\%), SAS (54\%) or AMR (39.48\%), the proportion of individuals for which the null hypothesis is rejected is much higher than for individuals from EAS (0.99\%) or AFR (2.1\%) . It might be that the linkage disequibrilium between the markers in these populations is higher in EAS and AFR or that the markers are less informative in these populations to detect linkage disequibrilium. However, these are only presumption as we cannot use the test to explain these differences between the populations. Note that we also applied the test with $M = 3340$ markers. In this setting, the null hypothesis has to be rejected for all 2504 individuals.}

The results in Figure \ref{fig:real} indicates that it is not sufficient to define a general rule for choosing between the Linkage Model and the Admixture Model based solely on the marker set. Instead, a method - such as the statistical test defined in equation \eqref{def:st} - is needed, which allows model selection on an individual or population-specific basis.

\begin{remark}[Test for a whole population]
    The same method is also applicable for testing whether the Linkage Model or the Admixture Model fits better to the data of $N$ individuals. In this framework, two modeling approaches are possible: either (i) the recombination rate $r$ is assumed to vary across individuals, requiring the estimation of $N \times K$ parameters, or (ii) a single global recombination rate $r$ is assumed, reducing the number of parameters to $N \times (K - 1) + 1$. 
\end{remark}

\begin{remark}[Validity of Assumption \ref{ass:consistency:inh} for the true data]\label{rem:validity}
   {  We see in Figure \ref{fig:distance} that Assumption (A5) is met. The assumption (A2) is fulfilled for all markers and all possible combinations of the populations, while (A1) is also true due to the method how we chose the markers. However, e.g. for the $M = 55$ marker in the AIM set by \cite{kidd2014}, this assumption is violated for 12 different $p_{k,m}.$ We need (A3) to justify the invertibility of the Fisher matrix, which is also true. The MLEs for $q$ was never equal to 0 or 1 and the MLE for $r$ was never equal to 0, i.e. also the assumption about the parameter space is met. Extended information about the real data set such as the precise allele frequencies and the MLEs can be found on \href{https://github.com/CarolaHeinzel/LinkageModel}{GitHub}, respectively.} 
\end{remark}

\begin{remark}[Estimation of the Uncertainty]
    { By comparing the two models to each other, the question which model has a lower uncertainty for the MLEs arises. This question is answered in Figure \ref{fig:AM} and Figure \ref{fig:LM} in the appendix by showing the covariance matrix of the MLEs. From this figures, we conclude that the uncertainty of the MLEs for $q^0$ is comparable small for the two models, while the uncertainty of $\hat r$ is relatively large. }
\end{remark}

%We also applied the statistical test a whole population. Therefore, we randomly select $20\%$ of the population and we use the remaining $80\%$ of the population to calculate the allele frequencies.  The result is that we reject the null hypothesis with significance level $0.05$ and the $p$-value was equal to \textbf{xxx}.

\section{Proofs of the Main Results}

We first prove that the MLE $\left(\hat Q^M, \hat R^M\right) := \left(\hat Q^{1,M}, \hat R^{1,M}\right)$ is asymptotically unique. Based on this, we prove consistency and central limit results for the MLE in the Linkage Model. For all proofs, we assume $C = 1$ to simplify the notation. 

To prove the consistency and the CLT, we represent the log-likelihood as a sum, i.e. it holds
\begin{align}\label{def:dm}
    &\ell((X_1,..., X_M), (q,r)) \\
    &\quad = \frac 1 M\sum_{m = 1}^M \underbrace{\log\left(\int \mathbb P_{q,r}(X_m|Z_m = z) \mathbb P_{q,r}(Z_m = dz|X_1,..., X_{m-1})\right)}_{ =: D_m^{q,r}}
\end{align}
according to \cite{van2008hidden}, Proposition 6.4, for the homogeneous case. This can easily be extended to the inhomogeneous case.

\subsection{Unique MLE}

Proving the uniqueness of a MLE for HMM has been already considered extensively for a finite state space, see e.g. \citep{finesso1990consistent, gilbert1959identifiability, blackwell1957identifiability, petrie1969probabilistic}.
In this section, we aim to prove the asymptotic uniqueness of the MLE in the Linkage Model, i.e. we aim to prove Theorem \ref{th:unique}. The idea is similar to \citep{cappe2005inference}, who used the Kullback-Leibler-divergence to prove identifiability. Therefore, we use that the limit
\begin{align}
    \ell((X_1,..., X_M), (q,r)) \xrightarrow[]{M \to \infty} \ell(q,r) \label{eq:convergence}
\end{align}
exists $\mathbb P$-almost surely. We will prove this in section \ref{sec:consistency}.

\begin{theorem}[Unique Maximum Likelihood Estimator] \label{th:unique}
   Let Assumption \ref{ass:consistency:inh} hold. Then, it holds
\begin{align}
    \ell(q^0, r^0) > \ell(q, r) \Leftrightarrow  (q, r) \neq \left(q^0, r^0\right).\label{eq:max}
\end{align}
\end{theorem}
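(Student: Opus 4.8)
The direction ``$\Rightarrow$'' is trivial: $\ell(q^0,r^0)>\ell(q,r)$ forces $\ell(q^0,r^0)\neq\ell(q,r)$, so $(q,r)\neq(q^0,r^0)$. The plan for the converse is to split \eqref{eq:max} into two pieces: an information inequality showing $\ell(q,r)\le\ell(q^0,r^0)$ for every $(q,r)\in\Theta$, and a strict-separation/identifiability step showing the inequality is strict off the truth. Throughout I would use the predictive representation $\ell((X_1,\dots,X_M),(q,r))=\frac1M\sum_{m=1}^M D_m^{q,r}=\frac1M\sum_{m=1}^M\log\mathbb P_{q,r}(X_m\mid X_{1:m-1})$ and the a.s.\ limit $\ell(q,r)$ (to be established in Section~\ref{sec:consistency}), together with the fact that $q^0$ is the common stationary law of the (time-inhomogeneous) $Z$-chain defined by \eqref{eq:tp}, so that $\mathbb P(Z_m=k)=q^0_k$ for all $m$.

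\textbf{Information inequality.} Swapping expectation and the Cesàro average (justified by dominated convergence, using that (A2) keeps all emission probabilities bounded away from $0$ and $1$, hence $D_m^{q,r}$ bounded), I would write
\begin{align*}
  \ell(q^0,r^0)-\ell(q,r)=\lim_{M\to\infty}\frac1M\sum_{m=1}^M a_m,\qquad a_m:=\mathbb E\big[D\big(\mathbb P_{q^0,r^0}(X_m\mid X_{1:m-1})\,\big\|\,\mathbb P_{q,r}(X_m\mid X_{1:m-1})\big)\big].
\end{align*}
Each $a_m$ is a relative entropy, hence $a_m\ge 0$ (Jensen applied after conditioning on $X_{1:m-1}$), and therefore $\ell(q,r)\le\ell(q^0,r^0)$. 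This part is routine; the content is to show the limit is \emph{strictly} positive whenever $(q,r)\neq(q^0,r^0)$, i.e.\ $\liminf_M\frac1M\sum_{m=1}^M a_m>0$.

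\textbf{Strict separation and identifiability.} Using the geometric forgetting of the prediction filter guaranteed by (A4) together with $d_m\ge\kappa_d$ from (A5), I would first replace $a_m$, up to an exponentially small error, by a ``local'' divergence depending only on $(d_{m-j},p_{\cdot,m-j})_{0\le j\le L}$ for fixed $L$. One then lower-bounds this local divergence using (A3): the one-marker marginals are $\pi_m:=\mathbb P_{q,r}(X_m=1)=\sum_{k=1}^K q_k^2 p_{k,m}$ (independent of $r$) and the lag-one joints satisfy $\mathbb P_{q,r}(X_{m-1}=1,X_m=1)-\pi_{m-1}\pi_m=e^{-d_m r}\big[\sum_{k}q_k^3 p_{k,m-1}p_{k,m}-\pi_{m-1}\pi_m\big]$, so at markers where the $p_{k,m}$ are pairwise distinct across populations these quadratic and cubic functionals of $q$ force $a_m$ to be bounded below unless $q=q^0$; once $q=q^0$, the value $e^{-d_m r}$ is pinned down by a single informative lag-one joint, and since $r\mapsto e^{-d_m r}$ is strictly decreasing on $[r_{lb},\infty]$ with value $0$ at $r=\infty$ (the Admixture Model), this gives $r=r^0$. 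Combining this with the information inequality yields the strict inequality in \eqref{eq:max}, and with the ``$\Rightarrow$'' direction this is exactly Theorem~\ref{th:unique}.

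\textbf{Main obstacle.} The delicate points are twofold. First, (A3) only provides \emph{infinitely many} informative markers, whereas a strictly positive Cesàro limit needs a non-vanishing fraction of the $a_m$ to be bounded below; bridging this gap is where (A4)--(A5) must be used carefully, e.g.\ to propagate informativeness from an informative marker to a controlled window around it. Second, the linear-algebra step recovering $q$ from the marginals is not immediate, since pairwise distinctness of the $p_{k,m}$ across populations is weaker than the rank conditions usually invoked, so one likely has to combine the quadratic marginals, the cubic lag-one joints, and possibly higher-order joints to rule out the (non-generic but not a priori impossible) ties in which a shift of $q$ is exactly compensated marker by marker. The time-inhomogeneity of the $X$-process (the $d_m$ and $p_{\cdot,m}$ vary with $m$, even though the $Z$-marginal stays $q^0$) is precisely what prevents a direct appeal to the stationary-ergodic identifiability arguments in the HMM literature and is the main structural difficulty.
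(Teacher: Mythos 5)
Your ``$\Rightarrow$'' direction and your information inequality ($\ell(q,r)\le\ell(q^0,r^0)$ via conditional relative entropies $a_m\ge 0$) are fine and are implicitly part of any proof of \eqref{eq:max}. The problem is the second half, which is also where you depart from the paper's route. You reduce the claim to $\liminf_M\frac1M\sum_{m=1}^M a_m>0$, i.e.\ to a \emph{quantitative} lower bound on the entropy-rate gap, and you then need a non-vanishing \emph{fraction} of markers at which the local divergence is bounded below. As you yourself observe, Assumption (A3) only supplies \emph{infinitely many} informative markers; if these have density zero along $m=1,\dots,M$, each $a_m$ at an informative marker can be bounded below by a constant and the Ces\`aro average still tends to $0$. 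Nothing in (A4)--(A5) propagates informativeness to a positive-density set of windows (geometric forgetting of the filter controls errors, not lower bounds), so the ``strict separation'' step does not go through as outlined. This is a genuine gap, not a technicality: it is exactly the point at which the whole approach either succeeds or fails, and you leave it open. (Your explicit moment computations, e.g.\ $\mathbb P_{q,r}(X_m=1)=\sum_k q_k^2 p_{k,m}$ and the lag-one covariance proportional to $e^{-d_m r}$, are correct for this model and would be useful for identifiability, but they do not address the density issue.)

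The paper avoids ever quantifying the gap. Its argument is soft and runs in the style of van Handel's Theorem 7.13: (i) if $\ell(q,r)=\ell(q^0,r^0)$, then, because the observation alphabet is finite, $\mathbb P_{q^0,r^0}|_{(X_m)_m}\ll\mathbb P_{q,r}|_{(X_m)_m}$; (ii) if $(q,r)\neq(q^0,r^0)$, identifiability from finite windows (Lemma \ref{lemma:id}) yields bounded separating statistics $h_i(X_{i:i+s})$ with different means under the two laws, and the Azuma--Hoeffding-type concentration bound for the uniformly ergodic inhomogeneous chain (Lemma \ref{azuma}) upgrades this to almost-sure convergence of $\frac1M\sum_i h_i$ to \emph{different constants} under the two laws (Lemma \ref{lemma:6}), so the laws are mutually singular; (iii) singularity and absolute continuity are incompatible, hence $\ell(q,r)\neq\ell(q^0,r^0)$, which together with the information inequality gives the strict inequality. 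If you want to salvage your route, you would either need to strengthen (A3) to a positive-density (or uniform) informativeness condition, or switch to the singularity-versus-absolute-continuity dichotomy, for which your window-wise divergence computations can feed the identifiability input but are not by themselves sufficient.
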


To prove Theorem~\ref{th:unique}, we first prove that the parameters are
identifiable from a finite number of observations. Based on this
identifiability result and the limiting Kullback--Leibler contrast, we then
conclude that the limiting log-likelihood has a unique maximizer at the true
parameter \((q^0,r^0)\).

We start by proving identifiability.

\begin{lemma}[Identifiability]
\label{lem:id}
Let \(K\geq 2\) and let Assumption~\ref{ass:consistency:inh} hold. Then, the parameters \((q,r)\) are identifiable.
\end{lemma}

\begin{proof}
Let \(\theta=(q,r)\) and \(\theta^0=(q^0,r^0)\). We prove the claim by contradiction, i.e. assume that the observation laws under \(\theta\) and \(\theta^0\) coincide. We show that \(\theta=\theta^0\). For every marker \(m\), the marginal distribution of \(X_m\) is determined by $\mathbb P_\theta(X_m=1)=\langle q_{\cdot}, p_{\cdot,m}\rangle.$
Since the observation laws under \(\theta\) and \(\theta^0\) coincide, their one-dimensional marginals coincide. Hence, for every \(m\), it holds $\langle q_{\cdot}, p_{\cdot,m}\rangle=\langle q^0_{\cdot}, p_{\cdot, m}\rangle.$
By Assumption~\ref{ass:consistency:inh}(A3), there exist indices \(m_1,\ldots,m_{K-1}\) such that
\[
\operatorname{rank}
\begin{pmatrix}
1 & \cdots & 1 \\
p_{1,m_1} & \cdots & p_{K,m_1} \\
\vdots & & \vdots \\
p_{1,m_{K-1}} & \cdots & p_{K,m_{K-1}}
\end{pmatrix}
=K.
\]
We also have
$\langle q_k-q_k^0, \mathbf 1\rangle =0.$ Therefore, it holds
\[
\begin{pmatrix}
1 & \cdots & 1 \\
p_{1,m_1} & \cdots & p_{K,m_1} \\
\vdots & & \vdots \\
p_{1,m_{K-1}} & \cdots & p_{K,m_{K-1}}
\end{pmatrix}
(q-q^0)=0.
\]
Since the matrix has full rank, it follows that \(q=q^0\). It remains to identify \(r\). For \(m\geq 2\), using the transition probabilities of the hidden chain, we obtain
\[
\mathbb E_\theta(X_{m-1}X_m)
=
e^{-d_m r}
\sum_{k=1}^K q_k p_{k,m-1}p_{k,m}
+
(1-e^{-d_m r})
\left(\sum_{k=1}^K q_k p_{k,m-1}\right)
\left(\sum_{k=1}^K q_k p_{k,m}\right).
\]
Consequently, we get
\[
\operatorname{Cov}_\theta(X_{m-1},X_m)
=
e^{-d_m r}\Delta_m(q),
\]
where
\begin{align}\label{eq:def_delta}
\Delta_m(q)
=
\sum_{k=1}^K q_k p_{k,m-1}p_{k,m}
-
\left(\sum_{k=1}^K q_k p_{k,m-1}\right)
\left(\sum_{k=1}^K q_k p_{k,m}\right).
\end{align}
Since \(q=q^0\) has already been shown and since the observation laws coincide, the two-dimensional marginal distributions of \((X_{m-1},X_m)\) also coincide. Hence, it holds
\[
e^{-d_m r}\Delta_m(q^0)
=
e^{-d_m r^0}\Delta_m(q^0)
\]
for every \(m\geq 2\). By Assumption~\ref{ass:consistency:inh}(A4), there exists at least one marker \(m\) such that \(\Delta_m(q^0)\neq 0\). Therefore, it holds $\exp({-d_m r})=\exp({-d_m r^0})$. Since \(d_m>0\), this implies \(r=r^0\). Thus \(q=q^0\) and \(r=r^0\), and therefore \(\theta=\theta^0\). This proves identifiability.
\end{proof}

Now, we aim to infer \eqref{eq:max} in Theorem Theorem \ref{th:unique}from the identifiability. Therefore, we will use the following particularity of the hidden chain.

\begin{lemma}[Invariant Measure]\label{lem:inv}
Let \(d_m>0\). For \(r>0\), \(q\) is the unique invariant measure
of the transition matrix of \(Z_m\). For \(r=0\), every probability measure on \(\{1,\ldots,K\}\) is invariant.
\end{lemma}

\begin{proof}
For fixed \(m\), let \(A_m\) denote the transition matrix of the hidden chain
from \(Z_m\) to \(Z_{m+1}\). Its entries are given by
\[
A_m(k,\ell) = A_{Z_m = k, Z_{m+1} = \ell}
=
\mathbb P(Z_{m+1}=\ell\mid Z_m=k)
 .
\]
Thus, the transition probability from \(Z_m\) to \(Z_{m+1}\) is
\(A_m(Z_m,Z_{m+1})\). Let \(\pi\) be an invariant probability measure for \(A_m\). Then
\(\pi A_m=\pi\). Using the representation of \(A_m\), we obtain
\[
\pi A_m
=
e^{-r d_{m+1}}\pi
+
(1-e^{-r d_{m+1}})\pi\mathbf 1 q^\top.
\]
Since \(\pi\) is a probability measure, \(\pi\mathbf 1=1\). Hence,
\[
\pi
=
e^{-r d_{m+1}}\pi
+
(1-e^{-r d_{m+1}})q.
\]
If \(r>0\) and \(d_{m+1}>0\), then \(e^{-r d_{m+1}}<1\). Therefore,
\[
(1-e^{-r d_{m+1}})\pi
=
(1-e^{-r d_{m+1}})q,
\]
and thus \(\pi=q\). Hence, \(q\) is the unique invariant probability measure
of \(A_m\).

It remains to consider \(r=0\). In this case, \(e^{-r d_{m+1}}=1\), and
therefore \(A_m=I\).
\end{proof}

We need that $(Z_m)_{m=1, 2, ...}$ is uniformly ergodic. Therefore, we first define uniform Ergodicity.

\begin{definition}[Uniformly Ergodic]\label{def:ue}
Let $|\cdot|_{TV}$ be the total variation norm. 
     A Markov chain with transition matrices $T_i^\theta$ is called uniformly ergodic, if it holds
    \begin{align*}
        |T^\theta_n ... T^\theta_1- \pi|_{TV} \leq c_\theta \rho_\theta^n
    \end{align*}
    for $\rho_\theta < 1, c_\theta \in \mathbb R.$
\end{definition}

For Lemma \ref{lemma:erg}, we remind ourselves on the definition of uniformly ergodic from Definition \ref{def:ue}.
\begin{lemma}[Uniformly Ergodicity in the Linkage Model]\label{lemma:erg}
    Under Assumption \ref{ass:consistency:inh}, $(Z_m)_{m=1, 2, ...}$ is uniformly ergodic.
\end{lemma}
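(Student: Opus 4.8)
The plan is to establish the uniform geometric ergodicity of $(Z_m)_{m=1,2,\dots}$ by exploiting the special structure of the transition matrices in \eqref{eq:tp}: each one is a convex combination of the identity-like kernel and a rank-one kernel that projects onto the fixed distribution $q$. Concretely, writing $T_m$ for the transition matrix at step $m$, one checks that
\begin{align*}
    T_m = e^{-d_{c,m} r}\, I + \left(1 - e^{-d_{c,m} r}\right) \mathbf{1} q^{\top},
\end{align*}
where $\mathbf 1 q^\top$ is the stochastic matrix all of whose rows equal $q$. Because $q$ is the (unique, by Lemma~\ref{lemma:invariant}) invariant distribution of every $T_m$, and because $\mathbf 1 q^\top$ is itself a Doeblin kernel with contraction coefficient $0$, the Dobrushin coefficient of $T_m$ satisfies $\delta(T_m) \le e^{-d_{c,m} r}$. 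By Assumption~\ref{ass:consistency:inh}, namely (A1) which bounds $r$ below by $r_{lb}$ (and in the relevant regime $r < \infty$) and (A5) which gives $d_{c,m} \ge \kappa_d$, we get a uniform bound $\delta(T_m) \le e^{-\kappa_d r_{lb}} =: \rho < 1$.

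Next I would combine these one-step bounds using submultiplicativity of the Dobrushin coefficient: $\delta(T_n \cdots T_1) \le \prod_{m=1}^n \delta(T_m) \le \rho^n$. Since for any initial distribution the difference $T_n^\top \cdots T_1^\top \mu - \pi$ (with $\pi = q$) is controlled in total variation by the Dobrushin coefficient acting on $\mu - \pi$, one obtains
\begin{align*}
    \left| T_n^\top \cdots T_1^\top - \pi \right|_{TV} \le c\, \rho^{\,n}
\end{align*}
for a universal constant $c$ (one may take $c = 1$ after a careful normalisation of the total variation norm), which is exactly the definition of uniform ergodicity stated just above the lemma. The case $r = \infty$ (the Admixture submodel) is even simpler, since then $T_m = \mathbf 1 q^\top$ exactly and convergence to $\pi$ happens in one step; but if one wants a single statement covering the whole compact parameter set $\Theta$ one should note that $r_{lb}$ may be $0$ only if $r=0$ is excluded — here I would simply restrict to $r \in [r_{lb}, \infty]$ with $r_{lb} > 0$ as in (A1), or treat the boundary case $r = 0$ separately using (A3)/(A4).

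The main obstacle, such as it is, is bookkeeping rather than conceptual: one must be careful that the constants $\rho_\theta, c_\theta$ can be chosen uniformly over $\theta = (q,r) \in \Theta$ — this is where compactness of $\Theta$ and the uniform lower bounds (A1), (A5) are essential, since without them $e^{-d_{c,m} r}$ could approach $1$. A secondary technical point is reconciling the coefficient $\lambda_{c,m}$ appearing in (A4) (the second-largest eigenvalue of $T_m$) with the Dobrushin coefficient: for the specific matrices here the second eigenvalue is exactly $e^{-d_{c,m} r}$ and coincides with the contraction rate, so (A4) is consistent with — indeed a consequence of — the bound just derived, and no circularity arises. I would close by remarking that uniform ergodicity in particular yields the almost-sure existence of the limit $\ell(q,r) = \lim_{M\to\infty}\ell((X_1,\dots,X_M),(q,r))$ needed in Section~\ref{sec:consistency}, via a standard ergodic-theorem argument on the stationary extension of $(Z_m, X_m)$.
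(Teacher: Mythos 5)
Your proof is correct, but it takes a genuinely different and more self-contained route than the paper. The paper disposes of this lemma in one line by invoking Theorem 3.3 of \cite{saloff2007convergence}, a general result on time-inhomogeneous chains that share a common invariant measure. You instead exploit the explicit algebraic form $T_m = e^{-d_{c,m}r} I + (1-e^{-d_{c,m}r})\mathbf{1}q^{\top}$ to compute the Dobrushin coefficient exactly ($\delta(T_m) = e^{-d_{c,m}r}$, since any two rows of $T_m$ differ only in the identity part), and then chain these one-step contractions by submultiplicativity; the between-chromosome transitions are even a full restart from $q$, so they only help. This buys an elementary, quantitative proof with an explicit rate $\rho = e^{-\kappa_d r_{lb}}$ that is manifestly uniform over the compact parameter set, which the citation-based argument leaves implicit. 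Your observation that the second eigenvalue $\lambda_{c,m}$ of (A4) coincides with $e^{-d_{c,m}r}$, so that (A4) follows from (A5) rather than being needed here, is also accurate and worth keeping. The one point requiring care is the one you flag yourself: the bound degenerates as $r \downarrow 0$, so uniformity over $\Theta$ genuinely requires $r_{lb} > 0$ (or a separate treatment of $r=0$, where $T_m = I$ and ergodicity fails outright); the paper's Assumption (A1) does not state $r_{lb}>0$ explicitly, though (A4) implicitly excludes the degenerate case. Making that hypothesis explicit is the only emendation needed.
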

\begin{proof}
According to \cite{saloff2007convergence}, Theorem 3.3, the convergence result holds from Lemma \ref{lem:inv}.
\end{proof}

We need an additional auxiliary result, for which we define the $\chi^2$-divergence, see e.g. \citep{sason2016f}.

\begin{definition}[\(\chi^2\)-divergence]
    For two probability measures \(P\) and \(Q\) on the finite space
\(\mathcal X^{s+1}\), with densities \(p\) and \(q\) with respect to counting
measure and \(q(x)>0\) for all \(x\in\mathcal X^{s+1}\), we define the
\(\chi^2\)-divergence of \(P\) with respect to \(Q\) by
\[
\chi^2(P,Q)
:=
\sum_{x\in\mathcal X^{s+1}}
\frac{(p(x)-q(x))^2}{q(x)}.
\]

\end{definition}
\begin{lemma}[Positive-density block separation]
\label{lem:block-separation}
Let Assumption~\ref{ass:consistency:inh} hold and let
\(\theta=(q,r)\neq\theta^0=(q^0,r^0)\). Then there exist an integer
\(s\in\mathbb N\), constants \(c_\theta>0\) and \(\eta_\theta>0\), and sets $\mathcal J_{\theta,M}\subseteq\{1,\ldots,M-s\}$
such that
\[
\liminf_{M\to\infty}
\frac{1}{M}\#\mathcal J_{\theta,M}
\geq \eta_\theta
\]
and, for all \(i\in\mathcal J_{\theta,M}\),
\[
\sum_{x_{0:s}\in\mathcal X^{s+1}}
\frac{
\left(
p_{\theta^0,i}^{(s)}(x_{0:s})
-
p_{\theta,i}^{(s)}(x_{0:s})
\right)^2
}{
p_{\theta,i}^{(s)}(x_{0:s})
}
\geq c_\theta .
\]
Here, \(p_{\theta,i}^{(s)}\) denotes the density of
\((X_i,\ldots,X_{i+s})\) under the parameter \(\theta\) with respect to
counting measure.
\end{lemma}

\begin{proof}
We distinguish two cases.

First, assume that \(q\neq q^0\). 
Since both \(q\) and \(q^0\) are probability vectors, we have $\langle q_\cdot - q^0_\cdot, \mathbf 1\rangle = 0.$ Hence, for every informative window from Assumption~\ref{ass:consistency:inh}(A3),
\[
\left\|
\begin{pmatrix}
1 & \cdots & 1 \\
p_{1,m_1(i)} & \cdots & p_{K,m_1(i)} \\
\vdots & & \vdots \\
p_{1,m_{K-1}(i)} & \cdots & p_{K,m_{K-1}(i)}
\end{pmatrix}
(q-q^0)
\right\|
\geq
\sigma_Q\|q-q^0\|.
\]
Therefore, at least one marker \(m_j(i)\) in the window satisfies
\[
\left|
\sum_{k=1}^K q_k p_{k,m_j(i)}
-
\sum_{k=1}^K q_k^0 p_{k,m_j(i)}
\right|
\geq
\frac{\sigma_Q}{\sqrt{K-1}}\|q-q^0\|.
\]
But, it also holds $\mathbb P_\theta(X_m=1)
=
\langle q_\cdot, p_{\cdot, m}\rangle.$
Thus, on a positive proportion of windows, at least one one-dimensional
marginal distribution of the observations differs under \(\theta\) and
\(\theta^0\) by an amount bounded away from zero. It additionally holds  for every \(s\ge t\)
\[
\chi^2\!\left(P_{\theta^0,i}^{(s)},P_{\theta,i}^{(s)}\right)
\ge
\chi^2\!\left(P_{\theta^0,i}^{(t)},P_{\theta,i}^{(t)}\right).
\]

Now assume that \(q=q^0\) and \(r\neq r^0\). We remind ourselves on the definition of $\Delta_m(w)$ from \eqref{eq:def_delta}.
By Assumption~\ref{ass:consistency:inh}(A4), there exist constants
\(\eta_\Delta>0\) and \(\kappa_\Delta>0\) such that
\[
\liminf_{M\to\infty}
\frac{1}{M}
\#\left\{
m\in\{2,\ldots,M\}:
|\Delta_m(q^0)|\geq \kappa_\Delta
\right\}
\geq \eta_\Delta.
\]
For adjacent markers, using the transition structure of the hidden chain,
\[
\operatorname{Cov}_\theta(X_{m-1},X_m)
=
e^{-d_m r}\Delta_m(q^0),
\]
whereas
\[
\operatorname{Cov}_{\theta^0}(X_{m-1},X_m)
=
e^{-d_m r^0}\Delta_m(q^0).
\]
Since \(r\neq r^0\) and \(d_m\in[\kappa_d,d_{\mathrm{up}}]\), we have
\[
\inf_{d\in[\kappa_d,d_{\mathrm{up}}]}
\left|e^{-dr}-e^{-dr^0}\right|
>0.
\]
Consequently, on a positive proportion of adjacent marker pairs,
the joint distributions of \((X_{m-1},X_m)\) under \(\theta\) and
\(\theta^0\) differ by an amount bounded away from zero. This again implies a
positive lower bound on the corresponding \(\chi^2\)-divergence.

The case \(q\neq q^0\) and \(r\neq r^0\) is covered by the first argument.
Choosing \(s\) large enough to contain the informative markers in the first
case and the adjacent pair in the second case gives the claim.
\end{proof}

\begin{lemma}[Separating block statistics]
\label{lemma:6}
Let \(\theta^0=(q^0,r^0)\) and let
\(\theta=(q,r)\neq\theta^0\). Assume that
Assumption~\ref{ass:consistency:inh} holds. Then there exist an integer
\(s\in\mathbb N\), sets
\[
\mathcal J_{\theta,M}\subseteq\{1,\ldots,M-s\}
\]
with
\[
\liminf_{M\to\infty}
\frac{1}{M}\#\mathcal J_{\theta,M}>0,
\]
and bounded measurable functions
\[
h_i^\theta:\mathcal X^{s+1}\to\mathbb R,
\qquad i\in\mathcal J_{\theta,M},
\]
such that
\begin{align*}
\frac{1}{\#\mathcal J_{\theta,M}}
\sum_{i\in\mathcal J_{\theta,M}}
h_i^\theta(X_i,\ldots,X_{i+s})
\xrightarrow[]{M\to\infty}_{a.s.}
1
\qquad
\mathbb P_{\theta^0}\text{-a.s.}, \\
\frac{1}{\#\mathcal J_{\theta,M}}
\sum_{i\in\mathcal J_{\theta,M}}
h_i^\theta(X_i,\ldots,X_{i+s})
\xrightarrow[]{M\to\infty}_{a.s.}
0
\qquad
\mathbb P_{\theta}\text{-a.s.}.
\end{align*}
\end{lemma}

\begin{proof}
The claim and the proof are based on Lemma 6 in \cite{douc2011consistency}. 
By Lemma~\ref{lem:block-separation}, there exist an integer
\(s\in\mathbb N\), constants \(c_\theta>0\) and \(\eta_\theta>0\), and sets
\(\mathcal J_{\theta,M}\subseteq\{1,\ldots,M-s\}\) such that
\[
\liminf_{M\to\infty}
\frac{1}{M}\#\mathcal J_{\theta,M}
\geq \eta_\theta
\]
and, for all \(i\in\mathcal J_{\theta,M}\),
\[
\chi^2\left(P_{\theta^0,i}^{(s)},P_{\theta,i}^{(s)}\right)
\geq c_\theta.
\]

For \(i\in\mathcal J_{\theta,M}\), define
\[
f_i^\theta(x_{0:s})
:=
\frac{
p_{\theta^0,i}^{(s)}(x_{0:s})
}{
p_{\theta,i}^{(s)}(x_{0:s})
}.
\]
Since \(\mathcal X^{s+1}\) is finite and the emission probabilities are
bounded away from zero and one by
Assumption~\ref{ass:consistency:inh}(A2), the densities
\(p_{\theta,i}^{(s)}\) are strictly positive. Hence \(f_i^\theta\) is well
defined. Moreover, it holds $\mathbb E_{\theta}
\left(
f_i^\theta(X_i,\ldots,X_{i+s})
\right)
=1$
and
\begin{align*}
\mathbb E_{\theta^0}
\left(
f_i^\theta(X_i,\ldots,X_{i+s})
\right)
&=
\sum_{x_{0:s}}
\frac{
\left(p_{\theta^0,i}^{(s)}(x_{0:s})\right)^2
}{
p_{\theta,i}^{(s)}(x_{0:s})
} \\
&=
1+
\chi^2\left(P_{\theta^0,i}^{(s)},P_{\theta,i}^{(s)}\right).
\end{align*}
Therefore, we get $\delta_i^\theta
:=
\mathbb E_{\theta^0}
\left(
f_i^\theta(X_i,\ldots,X_{i+s})
\right)-1
\geq c_\theta$ for all \(i\in\mathcal J_{\theta,M}\). Define
\[
h_i^\theta(x_{0:s})
:=
\frac{
f_i^\theta(x_{0:s})-1
}{
\delta_i^\theta
}.
\]
Then, we have $\mathbb E_{\theta^0}
\left(
h_i^\theta(X_i,\ldots,X_{i+s})
\right)
=1,
\mathbb E_{\theta}
\left(
h_i^\theta(X_i,\ldots,X_{i+s})
\right)
=0.$ It remains to prove uniform boundedness. Since \(\mathcal X^{s+1}\) is finite
and the emission probabilities are uniformly bounded away from zero and one,
there exists a constant \(C_s<\infty\) such that
\[
\sup_{M\geq 1}
\sup_{i\in\mathcal J_{\theta,M}}
\sup_{x_{0:s}\in\mathcal X^{s+1}}
\left|f_i^\theta(x_{0:s})\right|
\leq C_s.
\]
Together with \(\delta_i^\theta\geq c_\theta\), this implies
\[
\sup_{M\geq 1}
\sup_{i\in\mathcal J_{\theta,M}}
\sup_{x_{0:s}\in\mathcal X^{s+1}}
\left|h_i^\theta(x_{0:s})\right|
<\infty.
\]

By Lemma~\ref{lemma:erg}, the hidden chain is uniformly ergodic. Since
\(h_i^\theta(X_i,\ldots,X_{i+s})\) is uniformly bounded and depends only on a
fixed finite block of observations, a strong law of large numbers for bounded
block functions of uniformly mixing non-stationary sequences yields
\[
\frac{1}{\#\mathcal J_{\theta,M}}
\sum_{i\in\mathcal J_{\theta,M}}
\left(
h_i^\theta(X_i,\ldots,X_{i+s})
-
\mathbb E_{\theta^0}
\left(
h_i^\theta(X_i,\ldots,X_{i+s})
\right)
\right)
\xrightarrow[]{M\to\infty}_{a.s.}
0
\]
under \(\mathbb P_{\theta^0}\), and
\[
\frac{1}{\#\mathcal J_{\theta,M}}
\sum_{i\in\mathcal J_{\theta,M}}
\left(
h_i^\theta(X_i,\ldots,X_{i+s})
-
\mathbb E_{\theta}
\left(
h_i^\theta(X_i,\ldots,X_{i+s})
\right)
\right)
\xrightarrow[]{M\to\infty}_{a.s.}
0
\]
under \(\mathbb P_{\theta}\). Since the corresponding expectations are \(1\)
and \(0\), the claim follows.
\end{proof}

We use Lemma \ref{lemma:6} to prove the identifiability of the parameters. This is stated in the following theorem (Theorem 7.13 in \cite{van2008hidden}). 

\begin{proof}[Proof of Theorem~\ref{th:unique}]
The direction "$\Rightarrow$" is trivial. The other direction can be proves similarly to Theorem 7.13 in \citep{van2008hidden}. Let \(\theta=(q,r)\) and \(\theta^0=(q^0,r^0)\). We show that
\[
\theta\neq\theta^0
\implies
\ell(\theta^0)>\ell(\theta).
\]
By Lemma~\ref{lem:id}, the model is identifiable. Hence
\[
\theta\neq\theta^0
\implies
\mathbb P_{\theta}^{X}\neq \mathbb P_{\theta^0}^{X}.
\]
Moreover, by Lemma~\ref{lemma:6}, for every \(\theta\neq\theta^0\) there exists
a sequence of bounded block functions \(h_i^\theta\) such that
\[
\frac1n\sum_{i=1}^n h_i^\theta(X_i,\ldots,X_{i+s})
\xrightarrow[]{n \to \infty} 1
\qquad
\mathbb P_{\theta^0}\text{-a.s.},
\]
whereas
\[
\frac1n\sum_{i=1}^n h_i^\theta(X_i,\ldots,X_{i+s})
\xrightarrow[]{n \to \infty} 0
\qquad
\mathbb P_{\theta}\text{-a.s.}.
\]
Therefore, the observation laws under \(\theta^0\) and \(\theta\) are mutually
singular.

The limiting log-likelihood contrast is the negative limiting
Kullback-Leibler contrast. Hence it is uniquely maximized at the true
parameter. 
\end{proof}

\subsection{Consistency}\label{sec:consistency}

We prove Theorem \ref{cons:markers}, i.e. the consistency of the MLE. Therefore, we follow the ideas by \cite{van2008hidden}, which are stated in the homogeneous case. First, we note that both the transition probabilities and the emission probabilities are Lipschitz continuous in the parameters. 

\begin{lemma}\label{lem:LS}
Let $d_m \leq d_{up}$ for all $m \in \{1,..., M\}.$
   There exists a constant $L_t:=  {\max\{1, d_{up}\}} \in \mathbb R$ such that
    \begin{align*}
        |\mathbb P_{q, r}(X_m = x|Z_m = z_m) - \mathbb P_{q', r'}(X_m = x|Z_m = z_m)| &\leq  |(q,r) - (q', r')| \\
        |\mathbb P_{q, r}(Z_m = z_m|Z_{m-1} = z_{m-1}) - \mathbb P_{q', r'}(Z_m = z_m|Z_{m-1} = z_{m-1})| &\leq L_t |(q,r) - (q', r')|.
    \end{align*}
\end{lemma}
\begin{proof}
It holds 
\begin{align*}
        |\mathbb P_{q, r}(X_m = x|Z_m = z_m) - \mathbb P_{q', r'}(X_m = x|Z_m = z_m)| &=  {0}.
\end{align*}
It also holds
\begin{align*}
   & |\mathbb P_{q, r}(Z_m = z_m|Z_{m-1} = z_{m-1}) - \mathbb P_{q', r'}(Z_m = z_m|Z_{m-1} = z_{m-1})| \\
   &= |(1-e^{-rd_m})q_k - (1-e^{-r'd_m})q'_k |\\
&= \big|(1-e^{-r d_m})(q_k-q'_k) + q'_k(e^{-r' d_m}-e^{-r d_m})\big| \\
&\leq |1-e^{-r d_m}|\;|q_k-q'_k| + |q'_k|\,|e^{-r d_m}-e^{-r' d_m}|\\
&\leq |q_k-q'_k| + |e^{-r d_m}-e^{-r' d_m}|.
\end{align*}

Moreover, by the mean value theorem there exists $\xi \in [r, r']$ such that
\[
|e^{-r d_m}-e^{-r' d_m}| = d_m\,|r-r'|\,e^{-\xi d_m} \le d_m\,|r-r'|.
\]
\end{proof}

We already know that the true value is the unique maximum point of the likelihood according to Theorem \ref{th:unique}. Hence, we first prove 
\begin{align}\label{eq:0}
    \sup_{(q,r) \in \Theta} |\ell((X_1,..., X_M), (q,r)) - \ell(q,r)| \xrightarrow[]{M \to \infty} 0 
\end{align}
under Assumption \ref{ass:consistency:inh}.
Based on this, we almost immediately get the claim of Theorem \ref{cons:markers}.

Proving \eqref{eq:0} is Lemma \ref{3.9} whose proof is divided into three steps:
\begin{enumerate}
    \item[1)] Prove $\lim_{M \to \infty} \frac 1 M\sum_{i=1}^M \mathbb E\left(D^{q^0, r^0}_i\right) = \mathbb E\left(\ell\left(q^0, r^0\right)\right).$ 
    \item[2)] Prove $\ell((X_1,..., X_M), (q,r))$ converges for $M \to \infty$ a.s. We call the limit  $\ell(q,r).$ This is Lemma \ref{lemma:3.10}
    \item[3)] Prove equation \eqref{eq:0}. This is Lemma \ref{3.9}.
\end{enumerate}

Lemma \ref{lemma:3.10} and Lemma \ref{3.11} are Lemma 7.8 and Lemma 7.9 in \cite{van2008hidden}, respectively, and can easily be adapted to the inhomogeneous case.
\begin{lemma}\label{lemma:3.10}
Let Assumption \ref{ass:consistency:inh} hold and let
\begin{align*}
    D_{k, \ell}^{q,r} := \log\left(\int \mathbb P_{q,r}(X_k|Z_k = z) \mathbb P_{q,r}(Z_k = z|X_{k-1}, ..., X_{k-\ell}) dz \right)
\end{align*}

There exists a constant $\alpha$ so that  \begin{align*}
       \sup_{k \in \mathbb N} |D_{k, \ell}^{q,r} - D_{k}^{q,r}| \leq \alpha(1-\epsilon^2)^\ell
   \end{align*}
   for all $\ell, k \in \mathbb N.$
\end{lemma}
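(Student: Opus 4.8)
## Proof Proposal for Lemma \ref{lemma:3.10}

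The plan is to reduce the statement to the geometric stability (forgetting of the initial condition) of the optimal filter, following the route of \cite{van2008hidden} for the homogeneous case and adapting it to the inhomogeneous chain.

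First I would observe that inside $D_{k,\ell}^{q,r}$ and $D_k^{q,r}$ the function $\log$ acts only on arguments lying in a fixed compact subinterval of $(0,1)$, where it is Lipschitz. Writing $g_k^{q,r}(z) := \mathbb P_{q,r}(X_k \mid Z_k = z)$, each emission probability is either $q_z p_{\cdot,z,k}$ or $1-q_z p_{\cdot,z,k}$; by (A1) the coordinates of $q$ lie in $[\kappa_q,\kappa_q']$ and by (A2) one has $\kappa_p \le p_{\cdot,m} \le \kappa_p' < 1$, so $\underline{g} := \kappa_q\kappa_p \le g_k^{q,r}(z) \le 1-\kappa_q\kappa_p =: \bar{g} < 1$ for all $k,z$ and all $(q,r)\in\Theta$. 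Since both $D_{k,\ell}^{q,r}$ and $D_k^{q,r}$ are $\log$ of an average of $g_k^{q,r}$ against a probability measure, $\log$ is $\underline{g}^{-1}$-Lipschitz on the relevant range and
\[
 |D_{k,\ell}^{q,r} - D_k^{q,r}| \le \tfrac{\bar{g}}{\underline{g}}\,\bigl| \mathbb P_{q,r}(Z_k \in \cdot \mid X_{k-\ell:k-1}) - \mathbb P_{q,r}(Z_k \in \cdot \mid X_{1:k-1}) \bigr|_{TV}.
\]

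Next I would identify the two conditional laws on the right as the images of two different ``initial'' laws at time $k-\ell$ under one and the same sequence of filtering operations driven by $X_{k-\ell:k-1}$ (each operation being a prediction, i.e. left multiplication by the transition matrix $T_m^{q,r}$, followed by a Bayes correction against $g_m^{q,r}$): the first law is $\mathbb P_{q,r}(Z_{k-\ell}\in\cdot) = q$, which by the invariance Lemma \ref{lemma:invariant} is exactly the marginal law of $Z_{k-\ell}$, and the second is $\mathbb P_{q,r}(Z_{k-\ell}\in\cdot\mid X_{1:k-\ell-1})$. The correction step is non-expansive in total variation (indeed in the Hilbert projective metric), while the prediction step contracts: by the uniform ergodicity Lemma \ref{lemma:erg} there is a rate $\rho = 1-\epsilon^2\in(0,1)$, independent of $m$, of $k$, and of $(q,r)\in\Theta$, such that the Dobrushin coefficient of the composition over $\ell$ prediction--correction pairs is at most $\rho^\ell$. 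This yields a total-variation bound $\le 2\rho^\ell$, and setting $\alpha := 2\bar{g}/\underline{g}$ completes the proof. The remaining ingredients — Lipschitzness of $\log$, non-expansiveness of the Bayes correction, and the two-filter coupling bound — are exactly as in \cite{van2008hidden}, Lemma 7.8, and carry over verbatim.

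The routine but slightly delicate point, and the one I would spend the most care on, is the \emph{uniformity} of the contraction rate $\rho$ simultaneously in the marker index $m$, in the offset $k$, and in $(q,r)\in\Theta$: the off-diagonal transition entries $(1-e^{-d_{c,m}r})q_k$ degenerate as $r\to r_{lb}$ when $r_{lb}$ is small, so a naive per-step lower bound on the transition kernel need not be uniform. This is precisely what Lemma \ref{lemma:erg} (together with (A4)--(A5)) buys us — a uniform geometric rate for $|T_n^{q,r}\cdots T_1^{q,r}-q|_{TV}$ — at the mild cost of phrasing the filter-stability argument over fixed-length blocks of markers rather than single markers, which only rescales the constant $\alpha$ and the base $(1-\epsilon^2)$ and does not change the form of the estimate.
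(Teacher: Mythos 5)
Your proposal is correct and follows essentially the same route as the paper, which simply cites Lemma 7.8 of \cite{van2008hidden} and asserts it adapts to the inhomogeneous case: your reduction of $|D_{k,\ell}^{q,r}-D_k^{q,r}|$ to the total-variation distance between two filters started from different initial laws at time $k-\ell$, followed by the uniform filter-forgetting bound from Lemma \ref{lemma:erg} and the invariance of $q$ (Lemma \ref{lemma:invariant}), is precisely that adaptation, worked out in more detail than the paper provides. Your closing remark on the uniformity of the contraction rate in $(q,r)$ correctly identifies the only point where the inhomogeneous setting requires care (it needs $r_{lb}>0$ together with (A5) for a per-step minorization, or the blocking argument you sketch otherwise).
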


\begin{lemma}\label{3.11}% 3.11 in Rohde
    Let Assumption \ref{ass:consistency:inh} hold. There exists a constant $\alpha$ so that  \begin{align*}
       \sup_{k \in \mathbb N} |D_{k}^{q,r} - D_{k}^{q', r'}| \leq \alpha |(q, r)-(q', r')|_2
   \end{align*}
   for all $(q,r), (q', r') \in \Theta.$
\end{lemma}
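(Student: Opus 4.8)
The plan is to establish the Lipschitz bound in Lemma \ref{3.11} by differentiating the map $(q,r) \mapsto D_k^{q,r}$ and bounding the derivatives uniformly in $k$. Recall that
\[
D_k^{q,r} = \log\left(\int \mathbb P_{q,r}(X_k \mid Z_k = z)\, \mathbb P_{q,r}(Z_k = dz \mid X_1, \ldots, X_{k-1})\right),
\]
so the dependence on $(q,r)$ enters through the emission probability and through the predictive (filtering) distribution of $Z_k$ given the past. The emission probability $\mathbb P_{q,r}(X_k \mid Z_k = z) = (q_z p_{\cdot,z,k})^{x}(1 - q_z p_{\cdot,z,k})^{1-x}$ is smooth in $q$ and, under Assumption \ref{ass:consistency:inh} (A2) and the compactness of $\Theta$ in (A1), is bounded away from $0$ and $1$, hence has bounded logarithmic derivative. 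The transition probabilities \eqref{eq:tp} are smooth in $(q,r)$ with derivatives bounded on $\Theta$, using that $r \mapsto e^{-d_{c,m} r}$ and its derivative $-d_{c,m} e^{-d_{c,m} r}$ are bounded on the compact parameter range (and that the $d_{c,m}$ are bounded — here one only needs an upper bound on the distances, which holds since we work with finitely many markers, or alternatively one restricts attention to the contribution of each term). The first step is thus to record that both families of probabilities, together with their first partial derivatives in $(q,r)$, are uniformly bounded and uniformly bounded away from zero, which is exactly the Lipschitz-continuity remark already made at the start of Section \ref{sec:consistency}.

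The second step is to control the derivative of the predictive distribution $\mathbb P_{q,r}(Z_k = \cdot \mid X_{1:k-1})$ with respect to $(q,r)$, uniformly in $k$. This is the standard ``forgetting'' argument for filters: the filtering recursion is a composition of operations each of which contracts in total variation at a rate governed by the mixing of the hidden chain, which here is uniform by Lemma \ref{lemma:erg} (uniform ergodicity) and the Doeblin-type minorization implied by (A2)--(A5). One writes the predictive law as a function of the past observations and the parameter, differentiates the recursion, and uses that the product of the contraction coefficients over the $\ell$ steps preceding index $k$ decays geometrically (this is precisely the mechanism behind Lemma \ref{lemma:3.10}). Summing the geometric series yields a bound on $\partial_{(q,r)} \mathbb P_{q,r}(Z_k = \cdot \mid X_{1:k-1})$ that is independent of $k$. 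Combining this with the bound on the emission term from step one, and applying the chain rule to the logarithm (whose argument is bounded below by $\kappa_p \kappa_q$-type constants), gives $\sup_k |\partial_{(q,r)} D_k^{q,r}| \le \alpha$ for some constant $\alpha$ depending only on the constants in Assumption \ref{ass:consistency:inh}. The mean value theorem then delivers $\sup_k |D_k^{q,r} - D_k^{q',r'}| \le \alpha |(q,r) - (q',r')|_2$.

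The main obstacle is the second step: making the differentiation of the infinite filtering recursion rigorous and showing the derivative bound is genuinely uniform in $k$. The subtlety is that the filter at time $k$ depends on all $k-1$ earlier observations, so a naive bound would blow up with $k$; one must exploit the exponential forgetting to telescope the contributions. The cleanest route is to mimic the argument in \cite{van2008hidden} (Lemma 7.9) — approximating $D_k^{q,r}$ by $D_{k,\ell}^{q,r}$, which depends on only $\ell$ observations and hence is manifestly Lipschitz with a constant depending on $\ell$, then letting the approximation error (controlled by Lemma \ref{lemma:3.10}) be absorbed — but one must verify that the time-inhomogeneity of the chain does not spoil the uniform contraction. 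This is exactly where the uniform lower bounds (A2), (A5) and the uniform ergodicity (Lemma \ref{lemma:erg}) are used, and since those are already in hand the adaptation is routine though not entirely trivial to write out in full.
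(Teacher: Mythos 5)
Your proposal is correct and follows essentially the same route as the paper, which proves this lemma only by appeal to Lemma 7.9 of \cite{van2008hidden} together with the remark at the start of Section \ref{sec:consistency} that the transition and emission probabilities are Lipschitz in $(q,r)$ under Assumption \ref{ass:consistency:inh}; your combination of that Lipschitz property with the uniform filter forgetting (the mechanism behind Lemma \ref{lemma:3.10}) is exactly the intended argument, and you correctly flag the time-inhomogeneity as the only point needing verification.
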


Lemma \ref{3.9} is stated for the homogeneous case as Proposition 7.5 in \cite{van2008hidden}.
\begin{lemma}\label{3.9}
    Let Assumption \ref{ass:consistency:inh} hold. Then, equation \eqref{eq:0} holds.
\end{lemma}
\begin{proof}
For simplicity, we write $\theta = (q,r)$ and $\theta^0 = \left(q^0, r^0\right).$
We start chronologically:
\begin{enumerate}
\item[1)]  Since both, the transition probabilities and the emission probabilities are uniformly bounded below, we infer (according to the majorant criterion) 
    \begin{align*}
        \lim_{M \to \infty} \frac 1 M\sum_{i=1}^M \mathbb E\left(D^{q^0, r^0}_i\right) = \mathbb E\left(\ell\left(q^0, r^0\right)\right).
    \end{align*}
    \item[2)]   
    We apply Lemma 7.7 in \cite{van2008hidden} to $D_k^\theta - \mathbb E_{\theta^0}(D^\theta_k)$, for which we have to check whether there exists $\rho \in (0,1), C \in \mathbb R$ so that
    $$\mathbb E_{\theta^0}\left(D_k^\theta - \mathbb E_{\theta^0}(D^\theta_k)|X_1,..., X_\ell\right) \leq C \rho^{k-\ell}$$ holds. Therefore, we use Lemma \ref{lemma:3.10}.%, the details can be found in Rohde, Proposition 3.9.
    \item[3)] The details can be found in \cite{van2008hidden}. Let us mention that we first prove that $\ell(q,r)$ is also Lipschitz continuous by using Lemma \ref{lem:LS}. Then, the claim is a direct consequence of Lemma \ref{3.11} and the compactness of $\Theta$. 
\end{enumerate}

\end{proof}

\begin{proof}[Proof of Theorem \ref{cons:markers}]
We first show that if the likelihood has a unique maximum in $(q^0, r^0)$, the MLE tends to this. Therefore, we calculate $0 \leq \ell(q,r) - \ell(\hat Q^M, \hat R^M) \xrightarrow[]{M \to \infty} 0$ according to Lemma \ref{3.9}. Additionally, we know that the MLE is unique, which leads to the claim. The details are described by \cite{van2008hidden}, Theorem 7.6.
\end{proof}

\subsection{Central Limit Result}

To prove the asymptotic normality of the MLE (Theorem \ref{th:CLT}), we first prove  that the first derivative of $\ell((X_1,..., X_M), (q^0, r^0))$ is asymptotically normally distributed. Based on this, we conclude the claim with similar techniques as in \cite{hoadley1971}.

\begin{proposition}\label{th:normal}
    Let $J_{q^0, r^0} \succ 0$ and let Assumption \ref{ass:consistency:inh} hold. Then, it holds
    \begin{align*}
        \sqrt{M} \nabla \ell((X_1,...,X_M),(q^0, r^0))\xRightarrow[]{M \to \infty} \mathcal N\left(0, J^{-1}_{q^0, r^0}\right).
    \end{align*}
\end{proposition}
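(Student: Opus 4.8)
The plan is to write the score at the true parameter as a sum of martingale increments and then invoke a martingale central limit theorem. Using the additive representation $\ell((X_1,\dots,X_M),(q,r)) = \frac{1}{M}\sum_{m=1}^M D_m^{q,r}$ recorded at the start of this section, the normalized score is $\frac{1}{\sqrt{M}}\sum_{m=1}^M \nabla D_m^{q^0,r^0}$, where each $\nabla D_m^{q^0,r^0}$ is the gradient of $\log \mathbb{P}_{q^0,r^0}(X_m\mid X_{1:m-1})$. Because $D_m^{q,r}$ is the logarithm of a conditional density, differentiating the normalization $\int \mathbb{P}_{q,r}(x_m\mid X_{1:m-1})\,dx_m = 1$ and evaluating at the truth gives $\mathbb{E}\big(\nabla D_m^{q^0,r^0}\mid \mathcal{F}_{m-1}\big)=0$ with $\mathcal{F}_{m-1}=\sigma(X_{1:m-1})$. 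Hence the partial sums form a martingale, and Assumption \ref{ass:consistency:inh}(A2) (transition and emission probabilities bounded away from $0$ and $1$) forces the increments and their parameter-derivatives to be uniformly bounded, so the moment conditions below are automatic.

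First I would stationarize and truncate the increments. The gradient $\nabla D_m^{q^0,r^0}$ depends on the entire past through the prediction filter $\mathbb{P}_{q^0,r^0}(Z_m\mid X_{1:m-1})$, which is neither stationary nor of finite memory. Lemma \ref{lemma:3.10} (geometric forgetting of the filter) lets me replace $\nabla D_m^{q^0,r^0}$ by a finite-window surrogate that depends only on $X_{m-\ell:m}$, with an $L^2$-error of order $(1-\epsilon^2)^\ell$ uniformly in $m$. Choosing $\ell=\ell(M)\to\infty$ slowly makes the truncation error negligible after division by $\sqrt{M}$, reducing the problem to a martingale-difference array whose increments are bounded measurable functions of a bounded window of the uniformly ergodic chain (Lemma \ref{lemma:erg}).

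Next I would establish convergence of the predictable quadratic variation, i.e.\ show that $\frac{1}{M}\sum_{m=1}^M \mathbb{E}\big(\nabla D_m^{q^0,r^0}(\nabla D_m^{q^0,r^0})^{\top}\mid \mathcal{F}_{m-1}\big)$ converges in probability to a deterministic limit. Since $X$ is not stationary in the time-inhomogeneous setting, this is exactly where the concentration inequality of Lemma \ref{azuma} and the ergodic-averaging argument behind Lemma \ref{lemma:6} are needed: they promote the averages of the bounded, finite-window squared increments to in-probability limits despite inhomogeneity. The uniform boundedness of the increments supplies the conditional Lindeberg condition for free, and Brown's martingale central limit theorem then yields asymptotic normality of the normalized score with covariance equal to this limiting conditional-variance matrix.

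The final and most delicate step is to identify that limiting covariance with $J^{-1}(q^0,r^0)$ as stated. Differentiating $\int \mathbb{P}_{q,r}(x_m\mid X_{1:m-1})\,dx_m=1$ twice yields the second Bartlett (information) identity, which ties the limiting conditional variance of the score to the limiting expected Hessian defining $J(q^0,r^0)$; tracking this identity through the normalization adopted in the statement is what produces the inverse Fisher information $J^{-1}(q^0,r^0)$. I expect the main obstacle to be precisely this covariance bookkeeping together with the in-probability convergence of the quadratic variation under inhomogeneity: once the limit is pinned down as the (positive definite, by hypothesis $J\succ 0$) Fisher information and the forgetting error is controlled uniformly across coordinates, the martingale CLT closes the argument.
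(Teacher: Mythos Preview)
Your core strategy---write the score as the sum $\frac{1}{\sqrt{M}}\sum_m \nabla D_m^{q^0,r^0}$, observe via the score identity that these are $(\mathcal F_m)$-martingale differences, and then invoke a martingale CLT---is exactly the paper's route. The paper, however, proceeds more directly: it does \emph{not} truncate to a finite window via Lemma~\ref{lemma:3.10}, nor does it appeal to Lemma~\ref{azuma}. Instead it applies Theorem~3.2 of Hall and Heyde to the raw increments, checks the three conditions (3.18)--(3.20) by invoking only the uniform boundedness of the transition and emission probabilities, and then asserts that the (random) self-normalizer satisfies $\sigma_n^2/n\to J(q^0,r^0)$. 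Your filter-forgetting plus concentration argument is a legitimate alternative for making that last convergence explicit, but it is extra machinery the paper avoids; conversely, the paper's one-line ``follows from uniform boundedness'' leaves precisely the step you work hardest on (convergence of the quadratic variation under inhomogeneity) essentially unproved.

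One point where your argument does not close: your attempt to obtain the limiting covariance $J^{-1}(q^0,r^0)$ from the second Bartlett identity is off. The Bartlett identity yields $\mathbb E\big[\nabla D_m(\nabla D_m)^\top\big]=-\mathbb E\big[\nabla^2 D_m\big]$, so the asymptotic variance of the normalized score is the Fisher information $J(q^0,r^0)$ itself, not its inverse; no ``normalization bookkeeping'' turns $J$ into $J^{-1}$ here. The paper's proof is equally silent on this---it ends with ``$\sigma_n/n$ converges to the invertible matrix $J(q^0,r^0)$, we can directly infer the claim''---which together with the fact that $\ell$ is already divided by $M$ strongly suggests the $J^{-1}$ in the displayed statement is a typo for $J$ (the inverse then appearing, as it should, only in the MLE CLT of Theorem~\ref{th:CLT}). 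So do not contort your argument to manufacture the inverse; the martingale CLT gives covariance $J$, and that is what both proofs actually establish.
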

\begin{proof}
The proof idea is the same as the one of Proposition 5 in \citep{douc2001asymptotics}. We first define
$$N_k^{(q^0, r^0)} := \nabla D^{(q^0, r^0)}_k - \mathbb E\left(\nabla D^{(q^0, r^0)}_k\right).$$
Let $\mathcal F_n$ be the sigma field that is generated by $\nabla D^{(q^0, r^0)}_k, k = 1,..., n$. We notice that
$$\left(M^{(q^0, r^0)}_n\right)_{n \in \mathbb N} :=\left(\sum_{k=1}^n  N_k^{(q^0, r^0)} - \mathbb E\left(N_k^{(q^0, r^0)} |\mathcal F_{n-1}\right)\right)_{n \in \mathbb N}$$ is a martingale. 
%We just have to prove that the sum is in $L^1.$ However, since every summand is in $L^1,$ we immediately obtain the claim.  

Now, we check the constraints of Theorem 3.2 in \cite{hall2014martingale} to prove that $M^{(q^0, r^0)}_n/\sigma_n\xRightarrow[]{n \to \infty} \mathcal N(0, 1)$ for $$\sigma_n := \sum_{k=1}^n \left(N_k^{(q^0, r^0)}\right)^2.$$ Here, the notation omits that $\sigma_n$ depends on $q^0, r^0.$

  All three constraints (i.e. equations 3.18, 3.19 and 3.20 in \cite{hall2014martingale}), i.e.
    \begin{align*}
       & \max_k |\nabla D^{(q^0, r^0)}_k - \mathbb E(\nabla D^{(q^0, r^0)}_k)|/\sigma_n \xrightarrow[]{n \to \infty}_p 0,\\
        &\sum_{k=1}^n \left(\nabla D^{(q^0, r^0)}_k - \mathbb E\left(\nabla D^{(q^0, r^0)}_k\right)\right)^2/\sigma_n^2 \xrightarrow[]{n \to \infty}_p \eta^2 = 1, \\
        &\sup_{n}\mathbb E\left(\max_k \frac{\left(\nabla D^{(q^0, r^0)}_k - \mathbb E\left(\nabla D^{(q^0, r^0)}_k\right)\right)^2}{\sigma_n^2}\right) \leq c
    \end{align*}
    for $c \in \mathbb R,$
    follow directly by the uniformly boundedness of the transition and the emission probabilities of the hidden Markov chain. By using $\sigma^{(q^0, r^0)}_n/n$ convergences to the invertible matrix $J_{q^0, r^0}$, we can directly infer the claim.

\end{proof}

To prove Theorem \ref{th:CLT}, we need to change limes and integral at some point, for which we need to prove the boundedness of the second derivative of $D_m.$
\begin{lemma}[Uniform boundedness of second derivatives of $D_m$]\label{lemma:bounded}
 {
Let Assumption \ref{ass:consistency:inh} hold. There exists a constant $C<\infty$ such that
\[
\sup_{m\ge 1}\ \sup_{(q,r)\in\Theta}
\left|\frac{\partial^2 D_m(q,r)}{\partial \theta_a\,\partial \theta_b}\right|
\le C
\]
for every combination $\theta_a, \theta_b \in (q_1,..., q_{k-1}, r).$
}
\end{lemma}
\begin{proof}
Recall the definition of \(D_m^{q,r}\) from \eqref{def:dm}. Hence, to prove
the claim, we need to ensure that
\begin{itemize}
    \item[(i)] there exists a constant \(c>0\) such that
    \[
    \inf_m \int \mathbb P_{q,r}(X_m\mid Z_m=z)
    \mathbb P_{q,r}(Z_m=dz\mid X_1,\ldots,X_{m-1}) > c.
    \]
    \item[(ii)] the numerator of
    \[
    \frac{\partial^2 D_m^{q,r}}{\partial\theta_a\,\partial\theta_b}
    \]
    is uniformly bounded.
\end{itemize}

We start by proving (i). Let
\[
b_{m,k}:=\mathbb P(X_m=x\mid Z_m=k),
\qquad
\gamma_{m,k}:=\mathbb P(Z_m=k\mid X_{1:m-1}).
\]

Moreover, define $f_m:=\langle b_{m,\cdot},\gamma_{m,\cdot}\rangle,
$ which leads to $
D_m^{q,r}:=\log f_m.$ We first
prove that \(0<f_m<1\) for all \(m\). Set $b_-:=\min\{\kappa_p,\ 1-\kappa_p'\}>0,
b_+:=\max\{\kappa_p',\ 1-\kappa_p\}<1.$
Since \(x\in\{0,1\}\) and \(\kappa_p\le p_{k,m}\le \kappa_p'\), we have
\(b_{m,k}\in[b_-,b_+]\) for all \(m,k\). Since \(\gamma_{m,\cdot}\) is a probability
vector, \(f_m\) is a convex combination of numbers in
\([b_-,b_+]\). Hence,
\begin{equation}\label{eq:f-bounds}
b_- \le f_m \le b_+
\qquad
\forall m\ge 1.
\end{equation}

We now continue with the proof of (ii). We first need the uniform positivity
of \(\gamma_{m,k}\). Since \(\kappa_d\le d_m\) and \(r\ge r_{\mathrm{lb}}\), we have $\exp({-d_m r})
\le
\exp({-\kappa_d r_{\mathrm{lb}}})
=:\eta
<1,
1-\exp({-d_m r})
\ge
1-\eta
=:c_\lambda>0.$
For \(m\ge2\),
\[
\gamma_{m,k}
=
e^{-d_m r}\alpha_{m-1,k}
+
(1-e^{-d_m r})q_k
\ge
(1-e^{-d_m r})q_k
\ge
c_\lambda\kappa_q
=:\gamma_->0.
\]
For \(m=1\), we have \(\gamma_{1,k}=q_k\ge\kappa_q\), so the same type of lower
bound holds. Thus, uniformly in \(m\) and the parameters, it holds
\begin{equation}\label{eq:pi-lb}
\gamma_{m,k}\ge \gamma_->0
\qquad
\forall m,k.
\end{equation}

The filtering update is
\[
\alpha_{m,k}
=
\mathbb P(Z_m=k\mid X_{1:m})
=
\frac{b_{m,k}\gamma_{m,k}}{f_m}.
\]
Using
\eqref{eq:f-bounds}, \eqref{eq:pi-lb}, and \(b_{m,k}\ge b_-\), we obtain $\alpha_{m,k}
\ge
\frac{b_-\gamma_-}{b_+}
=:a_->0
\forall m,k.$ 
Let \(u:=(1/K,\ldots,1/K)\). Then \(\alpha_{m,k}\ge a_-\) implies $\alpha_m\ge \beta u,
\beta:=K a_-\in(0,1].$ Consequently, the filtering map \(F_m\), which maps
\(\alpha_{m-1}\) to \(\alpha_m\), satisfies a uniform total-variation
contraction. That is, there exists \(\rho:=1-\beta\in[0,1)\) such that, for all
probability vectors \(\alpha,\widetilde \alpha\),
\begin{equation}\label{eq:tv-contract}
\|F_m(\alpha;q,r)-F_m(\widetilde\alpha;q,r)\|_1
\le
\rho\|\alpha-\widetilde\alpha\|_1,
\end{equation}
uniformly in \(m\) and the parameters. In particular, it holds
\begin{equation}\label{eq:Jac-op}
\sup_{m\ge1}\sup_{(q,r)}\sup_\alpha
\left\|
\frac{\partial F_m}{\partial\alpha}(\alpha;q,r)
\right\|_{1\to1}
\le
\rho
<1.
\end{equation}

We now prove uniform bounds for the first and second derivatives of
\(\alpha_m\). For any free parameter
\(\theta\in\{q_1,\ldots,q_{K-1},r\}\), define
\[
J_m^{(\theta)}
:=
\frac{\partial\alpha_m}{\partial\theta}
\in\mathbb R^K.
\]
By the chain rule for \(\alpha_m=F_m(\alpha_{m-1};q,r)\), we get
\[
J_m^{(\theta)}
=
\frac{\partial F_m}{\partial\alpha}(\alpha_{m-1};q,r)
J_{m-1}^{(\theta)}
+
\frac{\partial F_m}{\partial\theta}(\alpha_{m-1};q,r).
\]
All entries of \(\partial F_m/\partial\theta\) are rational expressions in
\(b_{m,k}\), \(\gamma_{m,k}\), \(f_m\), and, for \(\theta=r\), in
\(e^{-d_m r}\) and \(-d_m e^{-d_m r}\). Their denominators are bounded away
from zero by \eqref{eq:f-bounds}--\eqref{eq:pi-lb}, and their numerators are
uniformly bounded. Hence, there exists \(B_1<\infty\) such that
\[
\sup_{m\ge1}\sup_{(q,r)}\sup_\alpha
\left\|
\frac{\partial F_m}{\partial\theta}(\alpha;q,r)
\right\|_1
\le
B_1.
\]
Together with \eqref{eq:Jac-op}, this yields
\[
\|J_m^{(\theta)}\|_1
\le
\rho\|J_{m-1}^{(\theta)}\|_1
+
B_1,
\]
and therefore
\[
\sup_{m\ge1}\|J_m^{(\theta)}\|_1<\infty
\]
by geometric summation. Similarly, for free parameters \(\theta_a,\theta_b\), define
\[
H_m^{(\theta_a,\theta_b)}
:=
\frac{\partial^2\alpha_m}
{\partial\theta_a\,\partial\theta_b}
\in\mathbb R^K.
\]
A second-order chain rule gives
\[
\begin{aligned}
H_m^{(\theta_a,\theta_b)}
={}&
\frac{\partial F_m}{\partial\alpha}
H_{m-1}^{(\theta_a,\theta_b)}
+
\frac{\partial^2 F_m}{\partial\alpha^2}
\!\left[
J_{m-1}^{(\theta_a)},
J_{m-1}^{(\theta_b)}
\right] \\
&+
\frac{\partial^2 F_m}{\partial\alpha\,\partial\theta_a}
J_{m-1}^{(\theta_b)}
+
\frac{\partial^2 F_m}{\partial\alpha\,\partial\theta_b}
J_{m-1}^{(\theta_a)}
+
\frac{\partial^2 F_m}{\partial\theta_a\,\partial\theta_b}.
\end{aligned}
\]
All second derivatives of \(F_m\) are uniformly bounded for the same reason as
above, now also using
\[
d_m^2e^{-d_m r}
\le
d_{\mathrm{up}}^2\eta,
\]
and the already established uniform bounds on \(J_{m-1}\). Hence, there exists
\(B_2<\infty\) such that
\[
\|H_m^{(\theta_a,\theta_b)}\|_1
\le
\rho\|H_{m-1}^{(\theta_a,\theta_b)}\|_1
+
B_2.
\]
Thus, we follow
\[
\sup_{m\ge1}
\|H_m^{(\theta_a,\theta_b)}\|_1
<\infty.
\]

Based on the previous steps, we can conclude the uniform boundedness of
\(\nabla^2 D_m^{q,r}\). Since
\[
\gamma_{m,\cdot}
=
e^{-d_m r}\alpha_{m-1}
+
(1-e^{-d_m r})q,
\]
the bounds on \(\alpha_{m-1}\), \(J_{m-1}\), and \(H_{m-1}\), together with the
uniform bounds on $\exp({-d_m r}),
d_m \exp({-d_m r}),
d_m^2 \exp({-d_m r}),$
imply that \(\gamma_{m,k}\) and its first and second derivatives with respect to
any free parameters are uniformly bounded in \(m\). Therefore,
\[
f_m=\langle b_{m,\cdot},\gamma_{m,\cdot}\rangle
\]
and its first and second derivatives are uniformly bounded as well. Finally, we note that
\[
\frac{\partial^2 D_m^{q,r}}
{\partial\theta_a\,\partial\theta_b}
=
\frac{
f_m f_{m,ab}
-
f_{m,a}f_{m,b}
}
{f_m^2},
\]
where $f_{m,a}
=
\langle b_{m,\cdot},\gamma_{m,a}\rangle,
f_{m,ab}
=
\langle b_{m,\cdot},\gamma_{m,ab}\rangle.$
Moreover, \(f_m^2\ge b_-^2>0\) by \eqref{eq:f-bounds}. 
\end{proof}
Finally, we are ready to prove the asymptotic normality of $\hat Q^M, \hat R^M$.

\begin{proof}[Proof of Theorem \ref{th:CLT}]
    We proceed similar to \cite{hoadley1971}. For simplicity, we write $\theta = (q,r)$. With probability $\epsilon_M \xrightarrow[]{M \to \infty} 1$, it holds
    \begin{align*}
        0 = \nabla \ell((X_1,..., X_M), \hat \theta^M).
    \end{align*}
    Hence,  we get
    \begin{align*}
        &\nabla \ell((X_1,..., X_M), \theta)|_{\theta = \hat \theta^M} - \nabla \ell((X_1,..., X_M), \theta^0) \\
        &= (\hat \theta^M - \theta^0)
        \cdot \int_0^1 \nabla^2 \ell((X_1,..., X_M), \theta^0 + (\hat \theta^M - \theta^0) \xi)d\xi.
    \end{align*}
    We define $I_M(\theta^0) := \int_0^1 \frac{1}{M} \sum_{k=1}^M \nabla^2 D_k((X_1,..., X_M), \theta^0 + (\hat \theta^M - \theta^0) \xi)d\xi.$
Consequently, 
\begin{align*}
   \sqrt{M}(\hat \theta^M - \theta^0) I_M(\theta^0) = \frac{1}{\sqrt{M}} \sum_{k=1}^M \nabla D_k((X_1,..., X_M), \theta^0).
\end{align*}
We prove $I_M(\theta^0) \xrightarrow[]{M \to \infty}_p J\left(\theta^0\right).$  It holds
\begin{align*}
    &\lim_{M \to \infty} |I_M\left(\theta^0\right) - J\left(\theta^0\right)| \\
    &= \lim_{M \to \infty} \int_0^1 \frac{1}{M}\sum_{k=1}^M \left( \nabla^2 D_k((X_1,..., X_M), \theta^0 + (\hat \theta^M - \theta^0) \xi) - \mathbb E\left(\nabla^2 D_k((X_1,..., X_M), \theta^0)\right)\right)d\xi \\
    %& \leq \lim_{n \to \infty} \int_0^1 L_c |(\hat \theta^M - \theta^0) \xi| d \xi  \\
    &= 0,
\end{align*}
where we used dominated convergence.  {Therefore, we have to ensure that $\nabla^2 D_k((X_1,..., X_M), \theta^0)$ is bounded a.s., which is Lemma \ref{lemma:bounded}.} Then, with Proposition \ref{th:normal} and the assumption that the matrix $J\left(\theta^0\right)$ is invertible, we conclude the claim. 
\end{proof}

\begin{remark}[Extension to diploid data]\label{rem:missing}
 {To extend our proofs to general ploidy, especially to diploid data, we would have to deal with the uniqueness of the MLE in this case and we would also have to extend the assumption about the invertibility of the Fisher information to the diploid case. In particular, the proof ideas remain the same.}
\end{remark}

\section{Discussion}
% Summary
From a biological perspective, the Linkage Model is useful because it explains genetic data in a relatively simple framework while accounting for linkage. From a mathematical point of view, it is particularly interesting as the stationary distribution of the hidden Markov chain remains the same across all markers, even though the underlying Markov chain is inhomogeneous. This specific model has already been considered in the context of Markov chains \citep{saloff2007convergence}. In this work, we use this property to prove limit results for the MLE in the Linkage Model.

Specifically, we investigate the consistency and the asymptotic normality of the MLE in the Linkage Model. In doing so, we prove the uniqueness of the MLE in this model. This is an important result, as in the Admixture Model-even in the supervised setting-the MLE is sometimes non-unique \citep{pfaffelhuber2022central, heinzel_g3, HEINZEL2025}.

The theoretical results can be applied in several ways, for example in marker selection, similar to \cite{pfaff2004information}, who proposed using CLTs for the Admixture Model. Marker selection remains a widely studied topic \citep{phillips2014,kidd2014,xavier2022,xavier2020development,pfaffelhuber2020,resutik2023,phillips2019maplex,kosoy2009ancestry}, and the presented results can contribute to assessing the quality of a marker set. Furthermore, the CLT represents the first published approach to quantify the uncertainty of the MLE in the Linkage Model.

Arguably, the most important application of the CLT is that it provides a theoretical foundation for a statistical test to compare the Linkage Model with the Admixture Model. This test helps determine which model better fits a given dataset. To my knowledge, this is the first data-based model selection method for the Linkage Model.

Of course, both the Admixture Model and the Linkage Model are simplifications of biological reality. This study only addresses the question of which of the two models fits a dataset better. It is still an open problem whether either model describes the data appropriately. Future work could include goodness-of-fit tests to evaluate whether either model is adequate at all. It would also be interesting to compare the performance of different model selection methods, such as cross-validation \citep{anderson2004model}, with the statistical test developed here.

In this study, we only considered the supervised setting, i.e., the allele frequencies are assumed to be known. However, in the unsupervised setting, determining the number of ancestral populations is a major challenge in population genetics \citep{evanno2005, wang2019parsimony, pritchard2000, raj2014, verity2016, alexander2011}. So far, none of the existing methods perform well \citep{garcia2020evaluation}. A promising approach for the Linkage Model-possibly also applicable to the Admixture Model-could be methods for choosing the order of a HMM, such as those proposed in \cite{van2008hidden, mackay2002estimating}. 

 {Our theoretical results only hold for $M \to \infty$. However, there are many attempts to increase the power of the statistical test for smaller values of $M.$ This included bootstrap with Edgeworth expansion \cite{hall1992principles} or the Bartlett correction \citep{Lawley1956, Hayakawa1977, Cordeiro1987}. However, the theoretical statements only work for i.i.d. random variables here. Extending this theory to the Linkage Model and applying it to our test problem \eqref{def:st} might increase the power of the statistical test.}

%An example for this is e.g. undocumented relatedness \cite{}
\noindent
\textbf{Data availability} \\
\noindent
The python scripts (application of the statistical test, evaluation of the statistical test and calculation of the variance of the MLEs) are available on \href{https://github.com/CarolaHeinzel/LinkageModel}{GitHub} (\url{https://github.com/CarolaHeinzel/LinkageModel}).\\

\noindent
\textbf{Funding} \\
\noindent
CSH is funded by the Deutsche Forschungsgemeinschaft (DFG, German Research Foundation) – Project-ID 499552394 – SFB Small Data. \\ 

\noindent
\textbf{Acknowledgment} \\
\noindent
CSH thanks two anonymous referees, who significantly improved the representation of the result. \\

\noindent
\textbf{Declaration of Conflicts of Interest} \\
The author declares that there are no conflicts of interest.

\begin{small}
    \bibliography{Bibliography,HMM,phasing}
\end{small}

\section{Supplementary Material}

\subsection{Description of the Code}

\subsubsection{Calculating the Maximum Likelihood Estimators}\label{sec:identify}

 {
To calculate the test statistic, we have to calculate two maximum likelihood estimators: the one in the Linkage Model and the one in the Admixture Model. For the first one, we use random search. Alternatively, we could use grid search, the Expectation-Maximization algorithm or a Bayesian approach with Monte Carlo Markov Chain as implemented in STRUCTURE \citep{pritchard2000}. More precisely, the code performs a derivative-free optimization by sampling many candidate parameters:
\[
q^{(t)} \sim \mathrm{Dirichlet}(\eta \mathbf{1}),\qquad
r^{(t)} \sim \mathrm{LogUniform}(r_{\min},r_{\max}),\qquad t=1,\dots,T,
\]
computes $\ell\!\left(q^{(t)}, r^{(t)}\right)$ via the scaled forward algorithm, and returns
\[
(\hat q,\hat r) = \arg\max_{t\in\{1,\dots,T\}} \ \ell\!\left(q^{(t)}, r^{(t)}\right).
\]
This is an approximate MLE: it is the best among the sampled candidates. To calculate the MLE in the Admixture Model, we use a method that has been introduced by \cite{pfaffelhuber2022}, Lemma S1.1, that relies on a fix point equation. 
We evaluated the performance of our maximization method for the linkage model. Therefore, we simulated 1000 times according to the linkage model for $C = 1, K = 3, M = 50, 100, 200, 500, 1000, 10000$ with $p_{k,m} \sim_{i.i.d.} \mathcal U([0,1]), r^0 \mathcal U([0,10]), q \sim Dirichlet((1,1,1)).$ The means squared error between $(q^0, r^0)$ and the MLE is represented in Figure \ref{fig:eval_MSE}. We see that the performance for estimating the $q$ is much better than for estimating the $r.$ One reason is that the values of $r$ are higher.}

\begin{figure}
    \centering
    \includegraphics[width=0.5\linewidth]{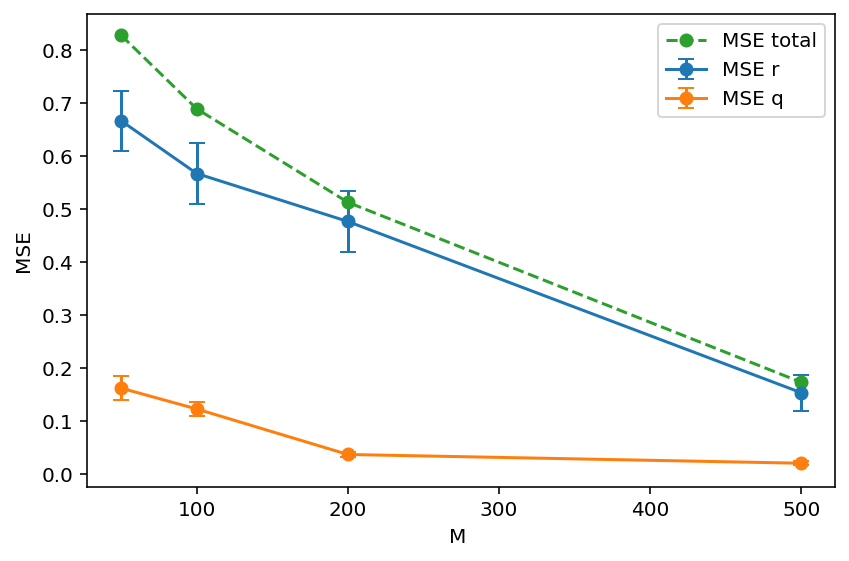}
    \caption{Evaluation of the performance of calculating the MLE in the Linkage Model for $K = 3$. }
    \label{fig:eval_MSE}
\end{figure}

\subsubsection{Simulation with msprime}\label{sec:msprime}

 {
We simulate a neutral, symmetric island model with 
$K$ populations (demes), each having the same effective population size $N_e$ with the help of msprime \cite{baumdicker2022efficient}.
We have a constant pairwise migration rate $m$.
A rough target $F_{ST}$ can be converted into a migration rate via the
(island-model) approximation
\[
F_{ST} \approx \frac{1}{1 + 4N_e (K-1)m}
\quad\Rightarrow\quad
m \approx \frac{1/F_{ST}-1}{4N_e(K-1)}.
\]
For each deme, we sample $n$ diploid individuals.
Although individuals are diploid, \texttt{msprime} represents samples as haploid
nodes; allele frequencies are therefore computed on haplotypes (which is
standard for estimating population allele frequencies). Ancestry is simulated along a chromosome of length $L$ with recombination rate
$\rho$ (per bp per generation). Mutations are then overlaid at rate $\mu$
(per bp per generation).
To obtain multiple independent chromosomes, the code runs $C$ independent
replicates. Across all simulated chromosomes, the code iterates through variants and exclusively considers biallelic sites only.
For each retained SNP, the alternate-allele frequency in deme $k$ is computed as
\[
\hat p_k = \frac{\sum_{i\in k} g_i}{n_k},
\]
where $g_i\in\{0,1\}$ is the haploid genotype at that SNP and $n_k$ is the number
of sampled haplotypes in deme $k$.
The SNP is scored by the range of allele frequencies across demes
\[
\text{score} = \max_k(\hat p_k) - \min_k(\hat p_k).
\]
The top $M$ SNPs by this score are retained using a min-heap. Let $M$ be the number of selected markers. The allele-frequency matrix
$p\in\mathbb{R}^{K\times M}$ is built as $p_{k,t}=\hat p_k$ for marker $t$.
To avoid $\log(0)$ in likelihoods, entries are clipped to
$[\varepsilon,1-\varepsilon]$ for a small $\varepsilon$. Inter-marker distances $d_t$ (in centiMorgans) are approximated from physical
distances using
\[
d_t \approx 100\cdot \rho \cdot \Delta bp_t,
\]
where $\Delta bp_t$ is the distance to the previous selected marker on the same
chromosome. Chromosome boundaries are encoded as a very large distance to force an effective break in linkage. The resulting $(p,d)$ are then used to simulate diploid genotypes under (i) the admixture model and the linkage model,
followed by re-running the statistical test to assess type-I error and power.}
\subsubsection{Extended Information about the simulated data with msprime}

Figure \ref{fig:d1} shows the distribution ob the distances between the markers for the simulated markers with msprime. We see that in all three cases ($M = 50, 100, 1000)$ the distances are rather small (usually smaller than $10^{-4}$, which leads to a high power of the statistical test. 

\begin{figure}[H]
    \centering
    \begin{minipage}{0.45\textwidth}
    \centering
                \textbf{(A)}\\
    \includegraphics[width= \linewidth]{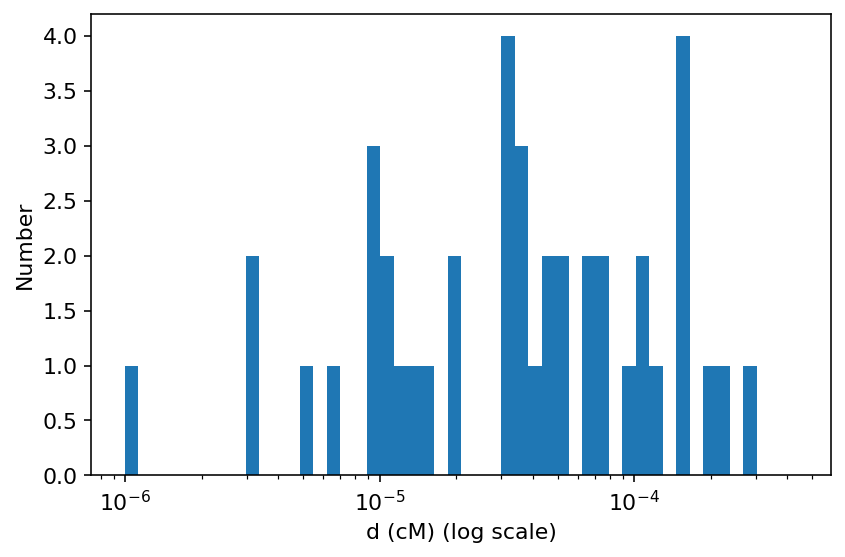}
    \end{minipage}
    \hfill
    \begin{minipage}{0.45\textwidth}
    \centering
                \textbf{(B)}\\
    \includegraphics[width=\linewidth]{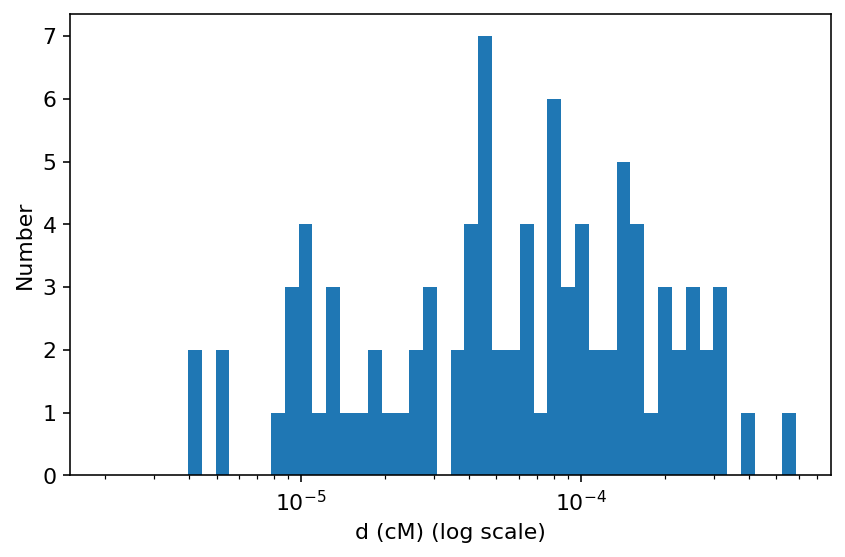}
    \end{minipage}\\
    \begin{minipage}{0.45\textwidth}
    \centering
                \textbf{(C)}\\
    \includegraphics[width= \linewidth]{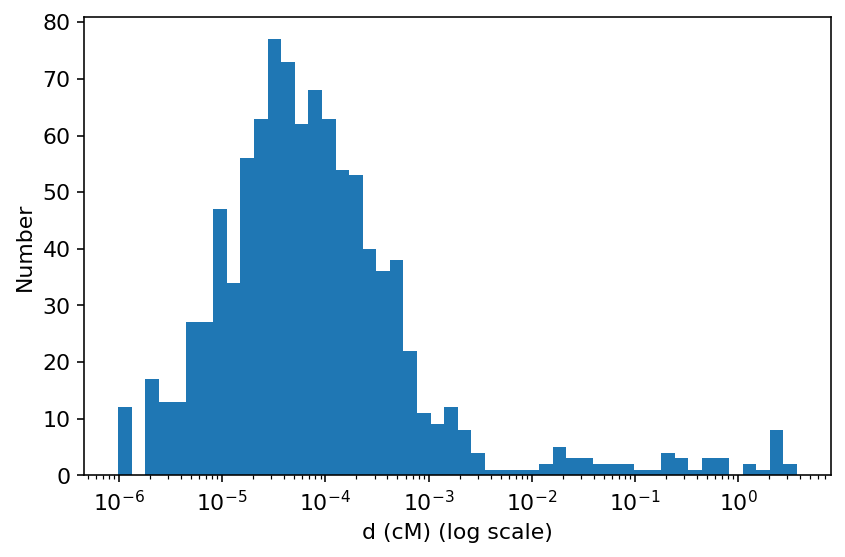}
    \end{minipage}\\
    \caption{Distribution of the distances $d_m$ in the simulation scenarios of msprime (A) $M =  50$, (B) $M = 100$, (C) $M = 1000$.}
    \label{fig:d1}
\end{figure}

\subsection{Extended Information about the real data}

The values for $d$ for the 1000 Genomes data in cM are shown in Figure \ref{fig:distance}.

\begin{figure}[h!]
    \centering
\includegraphics[width=0.95\linewidth]{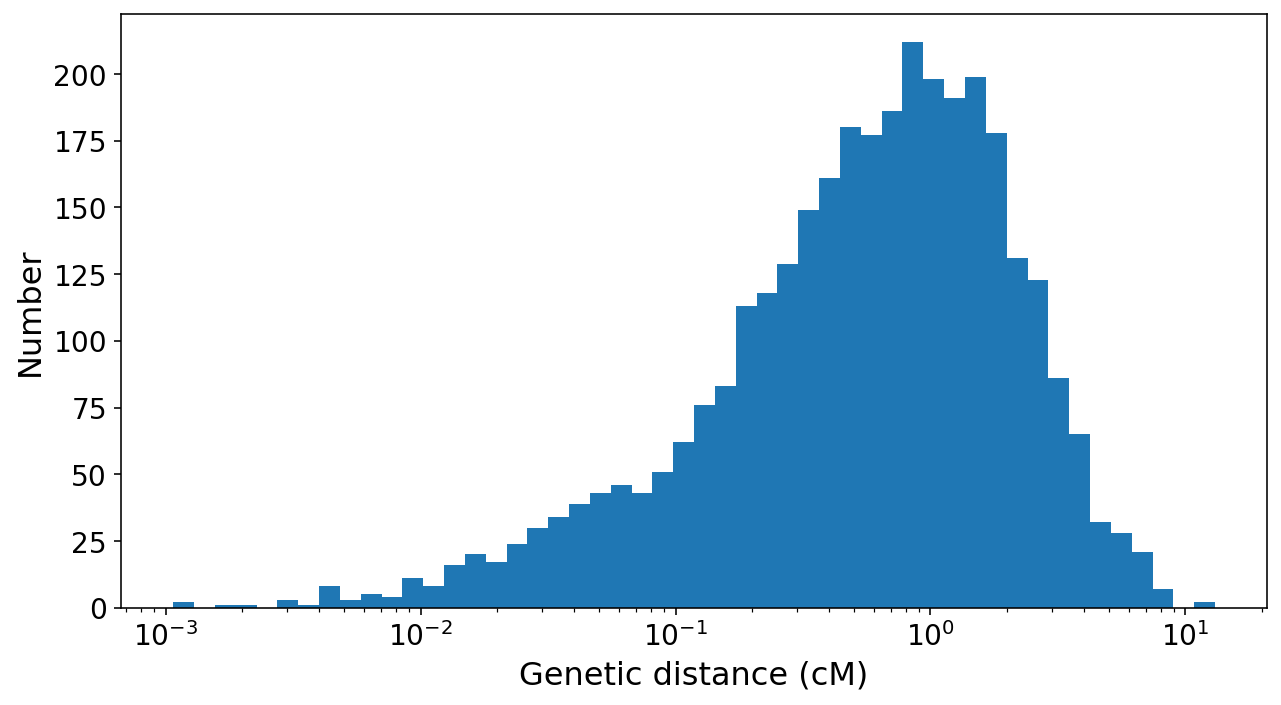}
    \caption{Genetic Distances of the markers that have been used in the \cite{10002015global} data.
}\label{fig:distance}
\end{figure}

\subsection{Estimation of the Uncertainty of the MLEs}

We further compare the covariance matrices of the MLEs obtained under the Linkage Model and the Admixture Model for individual HG00096 from Great Britain. For this analysis, we used 350 randomly chosen markers. The null hypothesis got rejected at significance level 0.05 (p-value: 0.01907). The results are shown in Figure \ref{fig:AM} and Figure \ref{fig:LM}, corresponding to the Admixture and Linkage Models, respectively.  {We accessed the covariance by using bootstrap.}

Notable, the MLEs for $q$ for both models is very similar.

\begin{figure}[H]
    \centering
    \begin{minipage}[t]{0.48\textwidth}
        \centering
        \includegraphics[width=\linewidth]{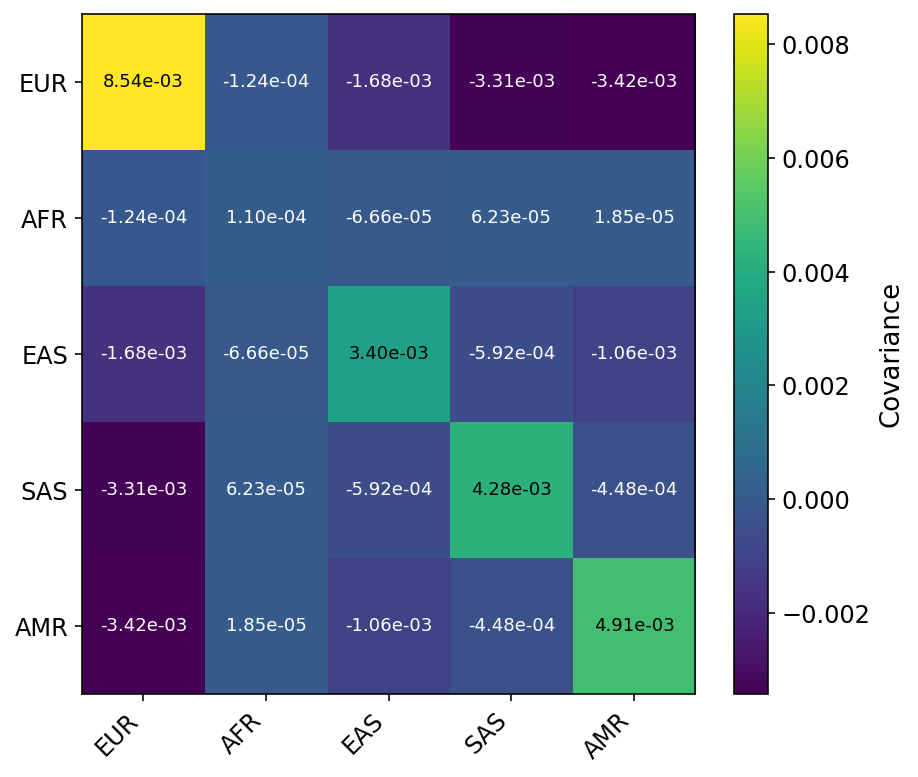}
        \caption{Covariance Matrix for the MLE in the Admixture Model. We considered individual HG00096. The MLE for $q$ was $(0.88 , 0.0047, 0.033, 0.040, 0.042).$ }
        \label{fig:AM}
    \end{minipage}
    \hfill
    \begin{minipage}[t]{0.48\textwidth}
        \centering
        \includegraphics[width=\linewidth]{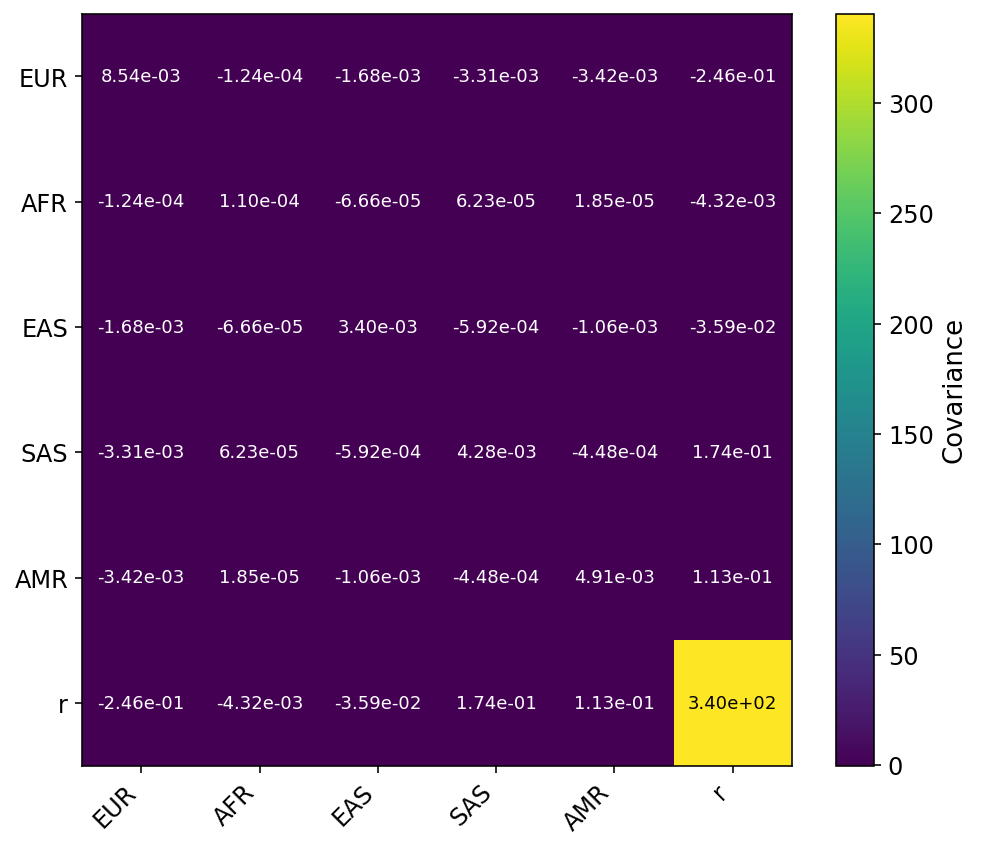}
        \caption{Covariance Matrix of the MLE in the Linkage Model. We considered individual HG00096. The MLE for $q$ was $(0.87, 0.0047 , 0.047, 0.035, 0.038)$ and for $r$ approximately $11.14.$}
        \label{fig:LM}
    \end{minipage}
\end{figure}

As expected, the variance of the estimators is higher in the Linkage Model due to the larger number of parameters. While both models clearly identify EUR as the primary ancestral population, the estimated admixture proportions differ significantly between models (estimated ancestry of almost 1 from Europe in the Admixture Model and estimated ancestry of approximately 0.8 from Europe in the Linkage Model).

\subsection{Evaluation of the statistical test for $K = 5$ and diploid individuals}

 {We also evaluated the performance of the statistical test for diploid individuals and $K = 5.$ The results are shown in Figure \ref{fig:evaluation_K5_diploid}. They, again, show that the Type I error is always smaller than the significance level 0.05. Additionally, we also see that for large $rd,$ test has a small power.}

\begin{figure}[H]
    \begin{minipage}{0.45\textwidth}
        \centering
        \textbf{(A)}\\
        \includegraphics[width=\linewidth]{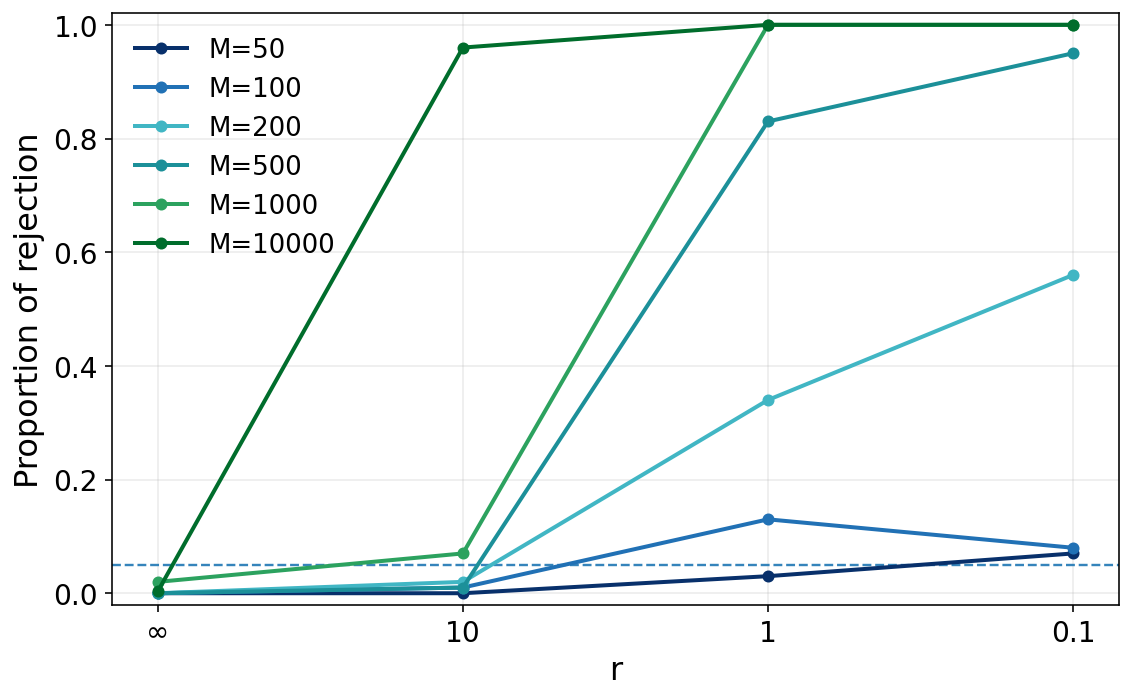}
    \end{minipage}
    \hfill
    \begin{minipage}{0.45\textwidth}
        \centering
        \textbf{(B)}\\
        \includegraphics[width=\linewidth]{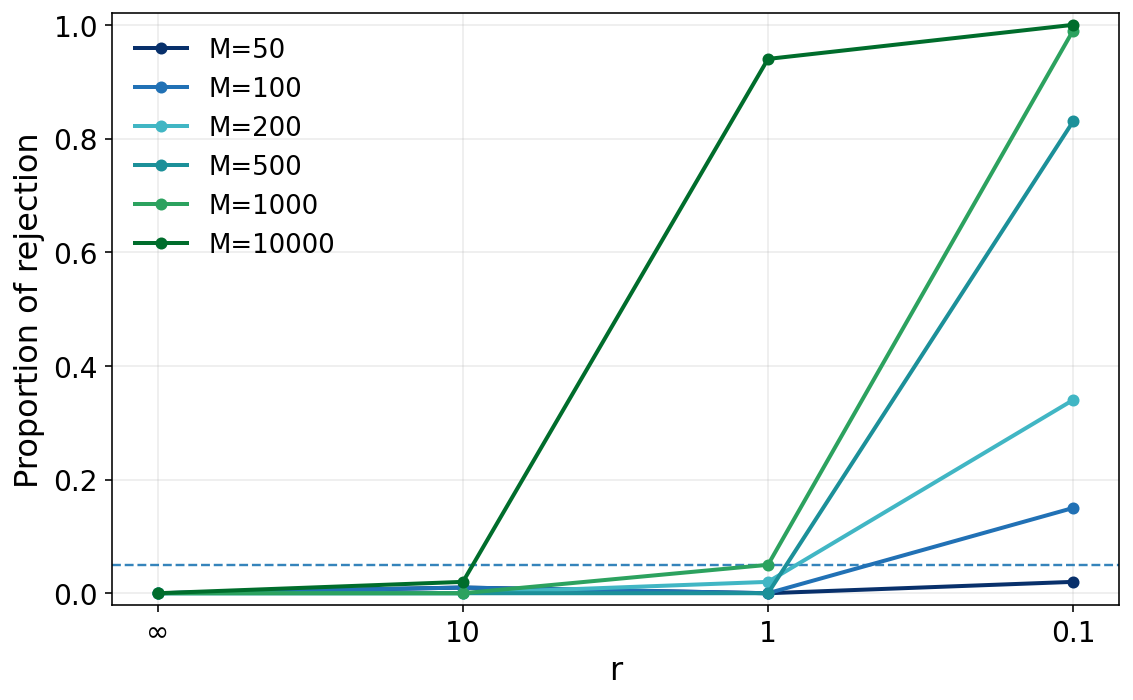}
    \end{minipage}\\[0.8em]

    \begin{minipage}{0.45\textwidth}
        \centering
        \textbf{(C)}\\
        \includegraphics[width=\linewidth]{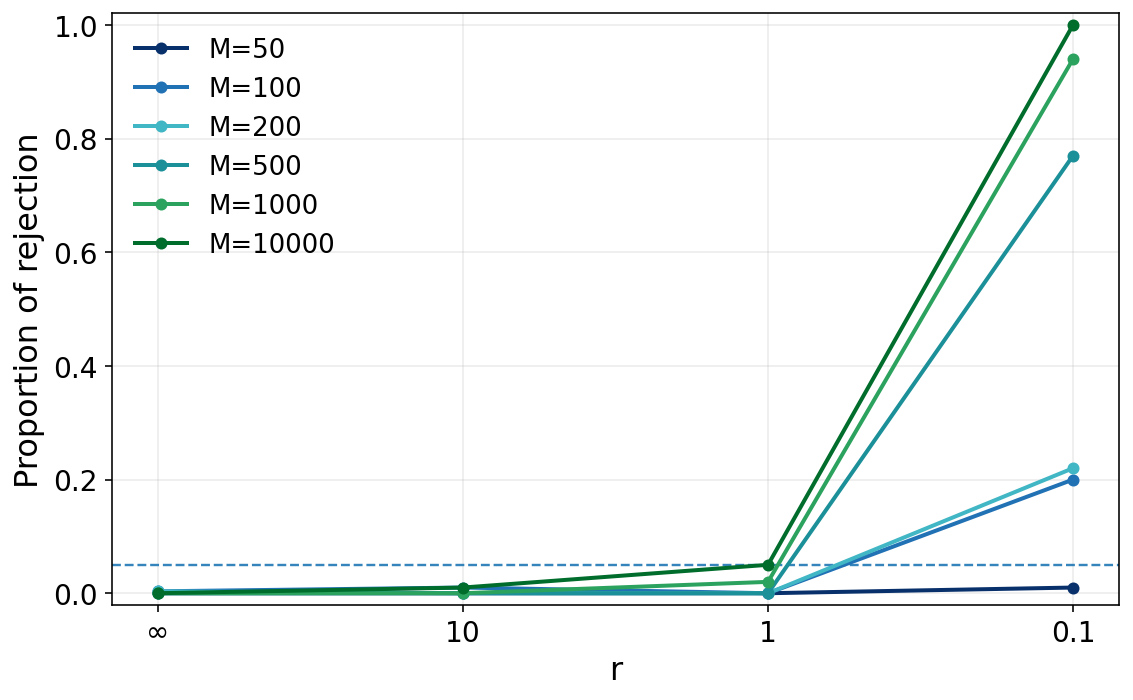}
    \end{minipage}
    \hfill
    \begin{minipage}{0.45\textwidth}
        \centering
        \textbf{(D)}\\
        \includegraphics[width=\linewidth]{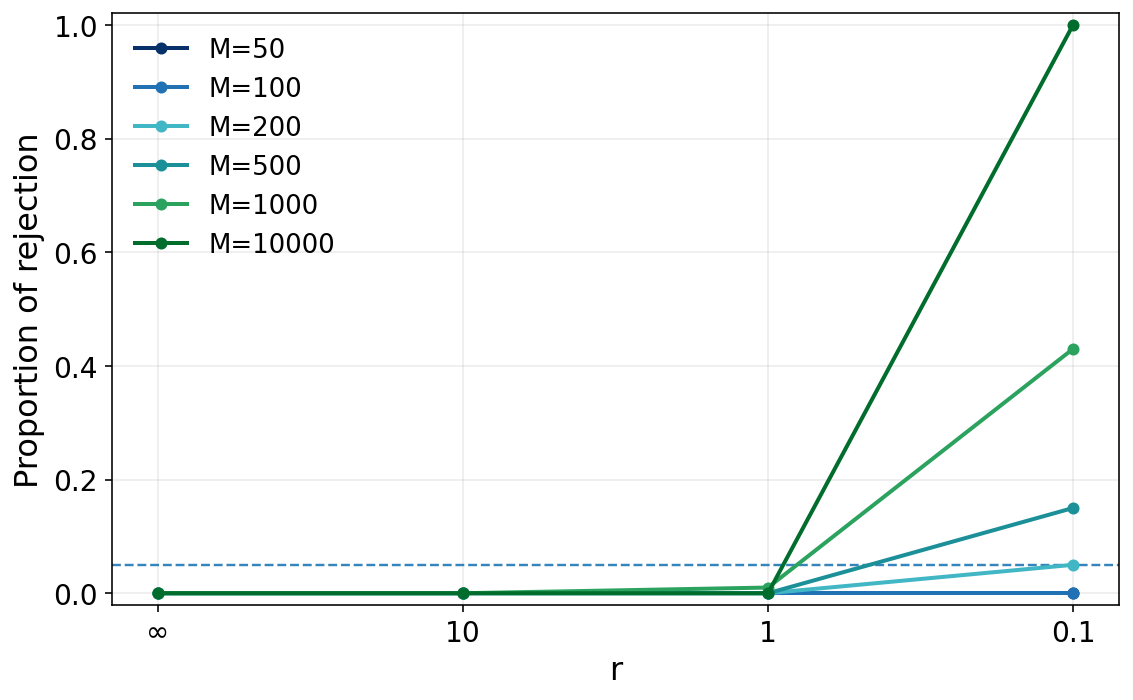}
    \end{minipage}
    \caption{Evaluation of the statistical test by using simulated data for different values of $r$ and $d$ for $K = 5$. We simulated diploid individuals. The true ancestry was $q^0 = (0.2, 0.05, 0.25, 0.4, 0.1).$}
    \label{fig:evaluation_K5_diploid}
\end{figure}

\end{document}